\newcounter{numofprop}[section]
\newtheorem{theorem}{Theorem}[section]
\newtheorem{predl}[theorem]{Proposition}
\newtheorem{lemma}[theorem]{Lemma}
\newcommand{\C}{\mathbb C}
\newcommand{\N}{\mathbb N}
\newcommand{\Z}{\mathbb Z}
\renewcommand{\AA}{\mathcal A}
\renewcommand{\SS}{\mathcal S}
\newcommand{\DD}{\mathcal D}
\newcommand{\CC}{\mathcal C}
\newcommand{\ra}{\mathbin{\rightarrow}}
\renewcommand{\P}{\mathbb P}
\newcommand{\Proj}{\mathbf{Proj~}}
\newcommand{\Spec}{\mathbf{Spec~}}
\newcommand{\D}{\mathbf D}
\renewcommand{\O}{\mathcal O}
\renewcommand{\le}{\leqslant}
\renewcommand{\ge}{\geqslant}
\newcommand{\wdt}{\overline}
\DeclareMathOperator{\Hom}{\textup{Hom}}
\DeclareMathOperator{\HHom}{\underline{Hom}}
\DeclareMathOperator{\Ext}{\textup{Ext}}
\DeclareMathOperator{\QGr}{\mathrm{QGr}}
\DeclareMathOperator{\Gr}{\mathrm{Gr}}
\DeclareMathOperator{\qgr}{\mathrm{qgr}}
\DeclareMathOperator{\gr}{\mathrm{gr}}
\DeclareMathOperator{\Tors}{\textup{Tors}}
\DeclareMathOperator{\tors}{\textup{tors}}
\def\s{\sigma}
\def\x{\chi}
\def\X{\mathrm{X}}
\begin{document}

\author{A.\,Elagin}
\title{
On exceptional collections on some log Del Pezzo surfaces with one singular
point\footnote{
This work was partially supported by CRDF grant RUM1-2661-MO-05}}
\date{}
\maketitle

\section{Introduction.}

Let $X$ be an algebraic variety, let $\mathrm{coh}(X)$ be the category
of coherent sheaves on $X$.
One of ways to describe the derived category
$\D^b(\mathrm{coh}(X))$ is to construct an exceptional
collection in it.

An object $E$ of a triangulated $\C$-linear category $\CC$ is called 
\emph{exceptional}                   
if $\Hom(E,E)=\C$ and $\Hom^i(E,E)=0$ for $i\ne 0$.
An ordered collection $(E_1,\ldots, E_n)$ 
is called \emph{exceptional} if all $E_k$ are exceptional
and  $\Hom^i(E_k,E_l)=0$ for $k>l$ and all $i\in \Z$. A collection
in $\CC$ is said to be \emph{full} if objects of the collection
generate the category $\CC$.

Of course, not all categories possess a full exceptional collection,
but it is proved that bounded derived categories of sheaves on some special 
varieties  do. We mention two results of this kind.
Exceptional collections of sheaves on smooth Del Pezzo surfaces
were constructed and investigated by D.\,Orlov and S.\,Kuleshov~\cite{Or,KuO}.
Y.\,Kawamata proved that full exceptional collections exist on
toric varieties with quotient singularities.

In this paper we construct full exceptional collections of sheaves on
some special hypersurfaces in weighted projective spaces. Namely, 
we consider hypersurfaces of degree $4n-2$  in the weighted projective spaces
$\P(1,2,2n-1,4n-3)$.
These surfaces are singular non-toric Del Pezzo surfaces,
they have a unique singular point. 
We consider these singular surfaces as smooth stacks in the following sense.
For any commutative $\N$-graded algebra $S$ finitely generated over the field 
$\C=S_0$ we can define
a stack structure on the projective variety $\Proj S$ 
by the action of $\C^*$ on $\Spec S\setminus\{0\}$
associated with the grading.
A sheaf on such stack is by definition a $\C^*$-equivariant sheaf on 
$\Spec S\setminus\{0\}$.
According to~\cite{AKO}, the category 
$\mathrm{coh}^{\C^*}(\Spec S\setminus\{0\})$ of $\C^*$-equivariant 
coherent sheaves on $\Spec S\setminus\{0\}$ is equivalent to the 
quotient category $\qgr(S)=\gr(S)/\tors S$, where $\gr(S)$ is 
the category of finitely generated graded $S$-modules and $\tors S$ is
its subcategory of torsion modules. 
By Serre theorem, if $S$ is generated by its homogeneous component $S_1$, then
the category $\qgr(S)$ is equivalent to the category $\mathrm{coh}(\Proj S)$
of coherent sheaves on $\Proj S$. 

Since the surfaces we consider are embedded into the weighted 
projective space, they  are of the form
$\Proj A$, where $A=\C[x_0,\ldots,x_3]/(f)$ is the quotient of the algebra
of weighted polinomials.
In our case $A_1$ does not  generate
$A$ and in fact the categories 
$\mathrm{coh}^{\C^*}(\Spec A\setminus\{0\})\cong\qgr(A)$ and 
$\mathrm{coh}(\Proj A)$ are not equivalent. 
We construct exceptional collections in the first category,
all the work is done in terms of modules over the algebra $A$. 
                                                   
Any hypersurface
$X=\Proj S$ of degree~$d$ in the weighted projective space
$\P(a_0{,}\ldots,a_n)$ possesses the exceptional collection
$(S,S(1),\ldots,S(-\kappa{-}1))$ in the category $\D^b(\qgr (S))$, 
where  $S=\C[x_0{,}\ldots,x_n]/{(f(x))}$ is the quotient algebra and 
$\kappa=d-\sum a_i$.
In our case this gives the collection 
$(A,A(1),\ldots,A(2n))$, which is not full.
To construct a full collection we consider modules~$\x_j$ with
support in the singular point $P\in X$. They
correspond to the same skyscraper sheaf $\O_P$ 
but carry different equivariant structures.
Using modules $\x_j$ we can form the collection
$(A,\ldots,A(2n),\x_{2n},\x_{2n-1},\ldots,\x_4,\x_3)$. 
This collection is full but it is not exceptional.
To obtain an exceptional collection we make mutations with 
modules $\x_4$ and $\x_3$ in the above collection. 

Recall that left and right mutations $L_X(Y)$ and $R_Y(X)$ 
of objects $X$ and $Y$ in a triangulated category
are defined by the triangles
$$L_X(Y)\ra \Hom^{\cdot}(X,Y)\otimes X\ra Y\quad\text{and}\quad
X\ra \Hom^{\cdot}(X,Y)^*\otimes Y\ra R_Y(X).$$
A left mutation of an object over an exceptional collection
is defined by induction:
$L_{\langle E_1\rangle}(X)=L_{E_1}(X)$ and
$L_{\langle E_1,\ldots,E_r\rangle}(X)=%
L_{E_1}(L_{\langle E_2,\ldots,E_r\rangle}(X))$; a right mutation is defined 
similarly. For properties of mutations see~\cite{Bo}. 

One of full exceptional collections in  $\D^b(\qgr(A))$ 
is the following:
$$(A,A(1),\ldots,A(2n-2),G_{2n-1},A(2n-1),G_{2n},
A(2n),\x_{2n},\x_{2n-1},\ldots,\x_6,\x_5).$$
Here  
$G_{2n}{=}L_{\langle A(2n),\x_{2n},\x_{2n-2},\ldots,\x_8\,\x_6\rangle}(\x_4)[n-2]$
and 
$G_{2n-1}{=}L_{\langle A(2n-1),\x_{2n-1},\x_{2n-3},\ldots,\x_7\,\x_5\rangle}(\x_3)[n-2]$
are twisted ideals in the algebra $A$.
We prove this in section~\ref{sectionEC}, theorems~\ref{EC1}~and~\ref{full}.  

Given an exceptional collection $(E_1,\ldots,E_m)$ we can
define a graded associative algebra
$\AA=\oplus_{i \in\Z}(\oplus_{1\le k\le l\le m}\Hom^i(E_k,E_l))$, 
where multiplication is given by the composition law.
If this algebra is concentrated in degree $0$ 
(i.e., $\Hom^i(E_k,E_l)=0$ for all $k,l$ and $i\ne 0$), 
the triangulated category generated by the collection can be recovered from
the algebra $\AA$.
Explicitly, this category is equivalent to $\D^b(\mathrm{mod-}\AA)$ (see~\cite{Bo}). 
In general case the algebra $\AA$ also contains a lot of information
on the category generated by the collection.
We compute the algebra of morphisms for one of exceptional collections
in $\D^b(\qgr(A))$ in section~\ref{alg}.

\section{Definition and properties of basic objects.}

Fix an integer $n$, $n\ge 2$. 

Let $B$ be the graded algebra of polinomials
$\C[x_0,x_1,x_2,x_3]$ generated by homogeneous variables 
$x_0,x_1,x_2,x_3$ of degrees $1, 2, 2n-1, 4n-3$ respectively.

Then $\Proj B$ is the weighted projective space
$\P(1,2,2n{-}1,4n{-}3)$. It is a singular variety with three singular points,
these points are cyclic quotient singularities of types
$\frac 12(1,1,1)$, $\frac 1{2n-1}(1,2,-1)$, $\frac 1{4n-3}(1,2,2n-1)$.

\subsection{Definition and properties of surface $X$.}
\label{sectiongeom}

Consider a hypersurface $X$ in $\P(1,2,2n{-}1,4n{-}3)$ defined by
a general homogeneous polinomial of degree $d=4n-2$.
One can easily check that such surface is unique up to an automorphism of
the weighted projective space. To be more precise, the following lemma holds:

\begin{lemma}
Suppose $f(x)\in B$ is a homogeneous polinomial of degree $4n-2$ and
coefficients of monomials $x_0x_3, x_2^2$ and $x_1^{2n-1}$ in $f(x)$ 
are not equal to zero.
Then there exists a homogeneous change of coordinates in $\P(1,2,2n{-}1,4n{-}3)$
that takes $f(x)$ to
$ f(x')=x_0'x_3'+(x_1')^{2n-1}+(x_2')^2$.
\end{lemma}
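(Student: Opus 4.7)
The plan is to reduce $f$ to the target form by a short sequence of admissible coordinate changes on $\P(1,2,2n{-}1,4n{-}3)$. Such a substitution must send each $x_i$ to a homogeneous polynomial of weighted degree $\deg x_i$ with nonzero $x_i$-coefficient, which by degree counting forces $x_0\mapsto c_0 x_0$, $x_1\mapsto c_1 x_1+\lambda x_0^2$, $x_2\mapsto c_2 x_2+p(x_0,x_1)$ with $\deg p=2n-1$, and $x_3\mapsto c_3 x_3+q(x_0,x_1,x_2)$ with $\deg q=4n-3$ (the $c_i$ nonzero).

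First I would observe that, since $n\ge 2$, one has $2(4n-3)>4n-2$, so $x_3$ appears at most linearly in $f$, and the only weighted-degree-$(4n-2)$ monomial involving $x_3$ is $x_0x_3$. Hence
$$f=\alpha x_0x_3+\gamma x_2^2+x_2\, p_1(x_0,x_1)+h_0(x_0,x_1),$$
with $\alpha,\gamma\ne 0$ by hypothesis and $\deg p_1=2n-1$. Completing the square in $x_2$ via the admissible change $x_2\mapsto x_2-p_1/(2\gamma)$ eliminates the $x_2$-linear term and yields $f=\alpha x_0x_3+\gamma x_2^2+h(x_0,x_1)$ for some polynomial $h$ of weighted degree $4n-2$ in $x_0,x_1$.

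Next, every monomial $x_0^a x_1^b$ of weighted degree $4n-2$ with $a=0$ must satisfy $2b=4n-2$, i.e.\ it is $x_1^{2n-1}$; hence one can split $h=\beta x_1^{2n-1}+x_0\, t(x_0,x_1)$ with $\deg t=4n-3$. The coefficient $\beta$ coincides with that of $x_1^{2n-1}$ in the original $f$: completing the square added $-p_1^2/(4\gamma)$ to $h_0$, but each monomial of $p_1$ contains $x_0$ to an odd (hence positive) power, so $p_1^2$ is divisible by $x_0^2$ and cannot contribute to the $x_1^{2n-1}$ coefficient. Thus $\beta\ne 0$. The admissible substitution $x_3\mapsto\alpha x_3+t(x_0,x_1)$ then absorbs the term $x_0 t$ into $\alpha x_0 x_3$, giving $f=x_0 x_3+\gamma x_2^2+\beta x_1^{2n-1}$, after which a final rescaling of $x_1$ and $x_2$ by suitable nonzero scalars normalizes $\gamma=\beta=1$.

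The argument presents no serious obstacle; the only point requiring care is the bookkeeping verification that each correction polynomial lies in $\C[x_0,x_1]$ with the correct weighted degree, so that the substitutions genuinely define automorphisms of $\P(1,2,2n{-}1,4n{-}3)$, and that the three distinguished monomials $x_0x_3,\,x_2^2,\,x_1^{2n-1}$ retain nonzero coefficients through the intermediate steps.
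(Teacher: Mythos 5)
Your proof is correct; the paper itself offers no argument for this lemma (it is introduced with ``one can easily check''), and your reduction --- splitting off the unique $x_3$-monomial $x_0x_3$, completing the square in $x_2$, absorbing the $x_0$-divisible remainder into $x_3$, and rescaling --- is exactly the standard computation the author is alluding to. The one point genuinely requiring care, that completing the square cannot destroy the $x_1^{2n-1}$-coefficient because every weighted-degree-$(2n-1)$ monomial in $x_0,x_1$ involves $x_0$ to an odd power, is checked correctly.
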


Further on we assume that 
\begin{equation}
f(x)=x_0x_3+x_1^{2n-1}+x_2^2.
\end{equation}

Below we list some geometric properties of $X$.

\begin{predl}\label{onesing}
\
\begin{itemize}
\item[(a)] The surface $X$ is rational.
\item[(b)] The surface $X$ has unique singular point $P=(0{:}0{:}0{:}1)$.
\item[(c)] The point $P$ is a cyclic quotient singularity of type
$\frac1{4n-3}(1,4)$. The configuration of exceptional curves on the minimal
resolution of $P\in X$ looks as follows:

\begin{center}
\begin{picture}(250,40)
\put(52,25){\line(1,0){46}}
\put(102,25){\line(1,0){46}}
\put(152,25){\line(1,0){46}}
\put(50,25){\circle{4}}
\put(100,25){\circle{4}}
\put(150,25){\circle{4}}
\put(200,25){\circle{4}}
\put(45,10){-n}
\put(95,10){-2}
\put(145,10){-2}
\put(195,10){-2}
\end{picture}
\end{center}
\item[(d)] $X$ is a Del Pezzo surface with $\mathrm{Pic}~X=\Z$.
\end{itemize}
\end{predl}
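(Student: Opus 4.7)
The plan is to handle parts (b), (c), (a), (d) in that order, since locating the singularity and understanding its local model feeds into the later claims. For (b), the singular locus of $\P := \P(1, 2, 2n-1, 4n-3)$ consists of exactly the three coordinate points $(0{:}1{:}0{:}0), (0{:}0{:}1{:}0), (0{:}0{:}0{:}1)$ with stabilizers $\mu_2, \mu_{2n-1}, \mu_{4n-3}$ (all pairwise gcds among $\{2, 2n-1, 4n-3\}$ being $1$, so no higher-dimensional stratum of $\P$ is singular). Substituting into $f = x_0 x_3 + x_1^{2n-1} + x_2^2$ shows only $P = (0{:}0{:}0{:}1)$ lies on $X$; elsewhere smoothness of $X$ follows from the Jacobian $(x_3, (2n-1) x_1^{2n-2}, 2x_2, x_0)$ not vanishing simultaneously outside the origin.

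For (c), I pass to the affine chart $\{x_3 \neq 0\}$, which presents $\P$ locally as $\C^3_{x_0, x_1, x_2}/\mu_{4n-3}$ with $\zeta$-weights $(1, 2, 2n-1)$. The equation $f = 0$ yields $x_0 = -x_1^{2n-1} - x_2^2$, eliminating $x_0$ and exhibiting $X$ near $P$ as $\C^2_{x_1, x_2}/\mu_{4n-3}$ with weights $(2, 2n-1)$. Using the equivalence $\frac{1}{r}(a, b) \cong \frac{1}{r}(ma, mb)$ for $\gcd(m, r) = 1$: taking $m = 2$ and noting $2(2n-1) \equiv 1 \pmod{4n-3}$ converts the action to weights $(4, 1)$, which after a coordinate swap is the canonical form $\frac{1}{4n-3}(1, 4)$. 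The dual graph of the minimal resolution is then read off from the Hirzebruch--Jung continued fraction $(4n-3)/4 = [n, 2, 2, 2]$, giving the claimed chain of four rational curves with self-intersections $-n, -2, -2, -2$.

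For (a), I use the chart $\{x_0 \neq 0\}$; since $x_0$ has weight $1$, this chart has trivial stabilizer and is isomorphic to $\C^3$ with coordinates $y_i = x_i/x_0^{a_i}$. In these coordinates $X$ is cut out by $y_3 + y_1^{2n-1} + y_2^2 = 0$, manifestly isomorphic to $\C^2$ via projection to $(y_1, y_2)$. Hence $X$ contains a dense open subset isomorphic to $\C^2$ and is rational.

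For (d), adjunction on a weighted projective hypersurface gives $\omega_X \cong \O_X(d - \sum a_i) = \O_X(-(2n+1))$, so $-K_X = \O_X(2n+1)$. Since $\O_\P(1)$ is $\mathbb{Q}$-ample on $\P$ (a suitable multiple being very ample), so are $\O_X(1)$ and $-K_X$; this shows $X$ is a log del Pezzo surface. For the Picard group, I would argue $\mathrm{Cl}(X) = \Z \cdot \O_X(1)$ via a Lefschetz-type statement for the weighted projective hypersurface, then identify the Cartier subgroup: since $\O_X(k)$ restricts at $P$ to the one-dimensional $\mu_{4n-3}$-representation of weight $k$, it is Cartier if and only if $(4n-3) \mid k$, yielding $\mathrm{Pic}(X) = (4n-3)\,\Z \cdot \O_X(1) \cong \Z$. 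The delicate step is verifying $\mathrm{Cl}(X) = \Z$ for our specific hypersurface; everything else is direct local computation.
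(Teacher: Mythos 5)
Your parts (a)--(c) follow essentially the same route the paper takes (and makes explicit only in outline): direct computation in the coordinate charts of $\P(1,2,2n{-}1,4n{-}3)$, identification of the local model $\frac1{4n-3}(2,2n-1)\cong\frac1{4n-3}(1,4)$ at $P$, and the Hirzebruch--Jung expansion $\frac{4n-3}{4}=[n,2,2,2]$ for the chain $-n,-2,-2,-2$. These steps check out.

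The one genuine gap is in (d), and you have flagged it yourself: you propose to get $\mathrm{Cl}(X)=\Z\cdot\O_X(1)$ from ``a Lefschetz-type statement for the weighted projective hypersurface,'' but no such statement applies here as a black box. Grothendieck--Lefschetz for class/Picard groups of hypersurfaces requires the hypersurface to have dimension at least $3$; for a \emph{surface} in a threefold the conclusion is a Noether--Lefschetz-type assertion that fails for special members and in any case cannot simply be cited for the specific, highly non-generic $f=x_0x_3+x_1^{2n-1}+x_2^2$. The paper closes this exact gap with an elementary excision argument whose ingredients you have already assembled: $X\cap\{x_0\ne 0\}\cong\C^2$ (your part (a)), so $\mathrm{Cl}(X\cap\{x_0\ne0\})=0$, and $X\cap\{x_0=0\}$ is the irreducible divisor $\{x_1^{2n-1}+x_2^2=0\}$ in $\P(2,2n-1,4n-3)$; the localization sequence $\Z\cdot[X\cap\{x_0=0\}]\to\mathrm{Cl}(X)\to\mathrm{Cl}(X\cap\{x_0\ne0\})\to 0$ then shows $\mathrm{Cl}(X)$ is generated by the hyperplane section, hence equals $\Z$ (it is infinite because $X$ is projective), and $\mathrm{Pic}(X)\subseteq\mathrm{Cl}(X)$ is a nontrivial subgroup, hence $\Z$. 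With that substitution your argument is complete; your ampleness argument via restriction of the $\Q$-ample $\O_{\P}(1)$ is a fine alternative to the paper's ``$-K_X$ effective plus $\mathrm{Cl}(X)=\Z$'' reasoning, and your finer observation that $\mathrm{Pic}(X)=(4n-3)\Z\cdot\O_X(1)$ is correct but not needed.
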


\begin{proof}
(a,b,c) The proof can be done by direct calculation in coordinates.
The configuration of exceptonal curves on the minimal
resolution is determined by the expansion of $\frac{4n-3}4$ into 
a continious fraction (see~\cite[sect. 2.6]{Fu}): 
$$\frac{4n-3}4=n-\cfrac1{2-\cfrac1{2-\cfrac12}}\:.$$

(d) 
Consider the intersection of $X$ with the hyperplane $x_0=0$. 
Since this intersection is a curve in $\P(2,2n-1,4n-3)$ given by the 
irreducible 
polinomial $x_1^{2n-1}+x_2^2=0$, it is an irreducible divisor. 
The complement $X\cap \{x_0\ne 0\}$ is isomorphic to an affine plane
so that $\mathrm{Cl}(X\cap \{x_0\ne 0\})=0$.
We conclude that the group $\mathrm{Cl}{}~X$ is generated by
the hyperplane section of $X$ (see~\cite[prop. II.6.5]{Ha}).
Therefore $\mathrm{Pic}{}~X=\Z$.

From the adjunction formula for $\P^3(1,2,2n-1,4n-3)$ and $X$ it
follows that the anticanonical divisor $-K_X$ is effective.
Since $\mathrm{Cl}{}~X=\Z$, the divisor $-K_X$ is ample. 
\end{proof}

Below we give an explicit construction of $X$ in terms of birational
transformations.

Suppose $\mathbf{F}_n$  is the Hirzebruch surface, 
$\mathbf{F}_n=\P_{\P^1}(\O\oplus\O(n))$. Blowing up
three points as shown on the picture below, we obtain the surface $\~X$.

\begin{center}
\begin{picture}(430,100)
\put(-7,80){$\mathbf{F}_n$}

\put(0,13){\line(1,0){90}}
\put(13,5){\line(0,1){85}}
\put(13,62){\circle*{4}}

\put(75,16){$E_0$}
\put(15,75){$E_1$}
\put(42,16){-n}
\put(15,50){\small 0}

\put(95,50){$\rightsquigarrow$}

\put(110,13){\line(1,0){85}}
\put(123,5){\line(0,1){65}}
\put(117,60){\line(3,2){40}}
\put(123,64){\circle*{4}}

\put(180,16){$\bar E_0$}
\put(110,45){$\bar E_1$}
\put(158,84){$E_2$}
\put(125,33){\small -1}
\put(152,16){-n}
\put(135,65){\small -1}

\put(205,50){$\rightsquigarrow$}

\put(220,13){\line(1,0){85}}
\put(233,5){\line(0,1){65}}
\put(227,57){\line(3,2){50}}
\put(267,88){\line(1,0){38}}
\put(262,80){\circle*{4}}

\put(290,16){$\bar{\bar E}_0$}
\put(220,40){$\bar{\bar E}_1$}
\put(240,78){$E_3$}
\put(295,74){$\bar E_2$}
\put(235,27){\small -2}
\put(262,16){-n}
\put(250,65){\small -1}
\put(278,77){\small -2}

\put(315,50){$\rightsquigarrow$}

\put(330,13){\line(1,0){85}}
\put(343,5){\line(0,1){65}}
\put(337,57){\line(3,2){50}}
\put(377,88){\line(1,0){38}}
\put(367,83){\line(2,-3){20}}

\put(400,16){$\bar{\bar{\bar E}}_0$}
\put(330,40){$\bar{\bar{\bar E}}_1$}
\put(350,78){$\bar{E}_3$}
\put(405,74){$\bar{\bar E}_2$}
\put(388,52){\small -1}
\put(345,27){\small -2}
\put(372,16){-n}
\put(388,77){\small -2}
\put(360,65){\small -2}

\put(427,80){$\~X$}
\end{picture}
\end{center}

\begin{predl}
The surface $\~X$ is a minimal
resolution of singularities of $X$; the exceptional divisor consists
of the proper transforms of $E_0, E_1, E_2, E_3$. 
\end{predl}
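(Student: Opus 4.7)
The plan is to follow the pictures: verify self-intersections through each blow-up, identify the resulting chain with the known resolution graph of $X$ from Proposition~\ref{onesing}(c), and then identify the resulting singular surface with $X$.

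First I would verify the labels by tracking self-intersections under blow-ups. When a smooth point on a curve $C$ is blown up, the proper transform loses one in self-intersection, and the new exceptional divisor is a $(-1)$-curve meeting transversely the proper transforms of the curves through the centre. Starting from $\mathbf F_n$ with its negative section $E_0$ (self-intersection $-n$) and a fibre $E_1$ (self-intersection $0$), the three blow-ups at the points indicated produce exactly the self-intersections shown. In particular, the proper transforms of $E_0,E_1,E_2,E_3$ on $\~X$ form a linear chain with self-intersection sequence $(-n,-2,-2,-2)$, and the fifth curve (the exceptional of the third blow-up) is a $(-1)$-curve meeting the chain.

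Second, by the Hirzebruch--Jung contraction theorem this chain blows down to a cyclic quotient singularity whose type is read off from the continued fraction
\[
n-\cfrac{1}{2-\cfrac{1}{2-\cfrac{1}{2}}}=\frac{4n-3}{4},
\]
giving exactly the type $\frac1{4n-3}(1,4)$ computed in the proof of Proposition~\ref{onesing}(c). Let $\pi:\~X\to Y$ be the resulting contraction; since $n\ge 2$ no curve of the chain is a $(-1)$-curve, so $\pi$ is minimal.

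The main task, and the hardest step, is to identify $Y$ with $X$. The natural approach is to construct explicit sections on $\~X$ of appropriate line bundles playing the roles of the weighted homogeneous coordinates $x_0,x_1,x_2,x_3$ of weights $1,2,2n-1,4n-3$ (determined by prescribing vanishing orders along the five special curves of $\~X$), and to check that they satisfy the defining equation $f=x_0x_3+x_1^{2n-1}+x_2^2$; this yields a morphism $\~X\to X$ which factors through $\pi$ and induces a finite birational morphism $Y\to X$, an isomorphism by normality of $X$. An alternative route is a classification argument: both $X$ and $Y$ are rational log Del Pezzo surfaces with $\mathrm{Pic}=\Z$ and a unique singularity of type $\frac1{4n-3}(1,4)$ with the same anti-canonical degree, and such a surface is unique. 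The delicate part in either approach is the bookkeeping of multiplicities along the five special curves of $\~X$.
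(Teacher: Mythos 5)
Your overall strategy is sound, and in fact your second route for the identification step is essentially the one the paper takes: the paper's entire proof is a one-line appeal to Kojima's classification of log Del Pezzo surfaces of Picard rank one with a unique cyclic quotient singular point (\cite[theorem 4.1]{Ko}), with all details skipped. Your preliminary bookkeeping (the self-intersections through the three blow-ups, the chain $(-n,-2,-2,-2)$, and the Hirzebruch--Jung continued fraction $\frac{4n-3}{4}=[n,2,2,2]$ recovering the type $\frac{1}{4n-3}(1,4)$) is correct and is more than the paper writes down. Your first route, via explicit sections of line bundles on $\~X$ realizing $x_0,x_1,x_2,x_3$ and verifying the relation $x_0x_3+x_1^{2n-1}+x_2^2=0$, is genuinely different from the paper's argument and would be more self-contained, but you leave precisely the hard part (the multiplicities of these sections along the five special curves) undone, so as written it is a plan rather than a proof.

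The one point I would press you on is the uniqueness claim in your classification route: ``such a surface is unique'' is exactly the nontrivial content of Kojima's theorem and cannot simply be asserted; you need either that reference or an argument. Moreover, to invoke such a classification you must first verify that the contracted surface $Y$ actually lies in the class being classified: that $Y$ is rational with $\mathrm{Pic}(Y)\cong\Z$ (this follows from rank considerations, since $\~X$ has Picard rank $5$ and you contract a chain of four curves) and, crucially, that $-K_Y$ is ample. The ampleness is not automatic from the picture and requires checking intersection numbers of $-K_{\~X}$ (corrected by the discrepancies of the chain) against the remaining curves, in particular the residual $(-1)$-curve. You should also match the anticanonical degrees $K_Y^2=K_X^2$. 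None of this is difficult, but it is part of the ``delicate bookkeeping'' you defer, and without it the classification argument does not yet close.
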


\begin{proof}
The proof follows from the classifiction of Del Pezzo surfaces over $\C$ 
of Picard rank one with  unique cyclic  quotient singularity, made 
in~\cite[theorem 4.1]{Ko}. 
We skip the details. 
\end{proof}

\subsection{Definition and properties of basic modules.}
\label{defA}
Let $A$ be the graded algebra 
$$A=B/(f)=\C[x_0,x_1,x_2,x_3]/{(x_0x_3+x_1^{2n-1}+x_2^2)},$$ 
it is the homogeneous coordinate algebra of $X$.
For any graded $A$-module $M$ let $M(k)$ be the same module
with grading shifted by $k$: $M(k)_i=M_{k+i}$.
Now we define $A$-modules that will be useful for a construction of
exceptional collections.

Consider $A$-modules 
$$\x=\x_0=A/(x_0,x_1,x_2)\cong\C[x_3]\quad\text{and}\quad\x_j=\x(j).$$
Obviously, the support of $\x_j$ is the singular point $P\in X$.
The homogeneous components $(\x_j)_k$ of $\x_j$ are one-dimensional for 
$k\ge -j$, $k \equiv -j\pmod{4n-3}$ and zero in other cases.

There exists a monomorphism $\x_k\ra \x_{k+4n-3}$, it is 
given by multiplication by $x_3$. 
Note that the cokernel of this monomorphism is a torsion module,
hence the modules $\x_k$ and $\x_{k+4n-3}$ correspond to isomorphic
objects in $\qgr(A)$. We see that in $\qgr(A)$ there exist
$4n-3$ nonisomorphic objects $\x_j$.

Define $A$-modules $Q_{j+2r,j}$ by
$$Q_{j+2r,j}=\left(A/{(x_0,x_1^{r+1},x_2)}\right)(j+2r)\cong%
\left(\C[x_1,x_3]/{(x_1^{r+1})}\right)(j+2r)$$
for any integers $j$ and $r$ such that $0\le r < 2n-1$.
As above, their support is the singular point $P\in X$.
Homogeneous components of $Q_{j+2r,j}$ of degree $k$ are one-dimensional
for $k\ge -(j+2r)$,
\mbox{$k\equiv -(j+2r),\ldots, -(j+2),-j\pmod{4n-3}$} and zero in
other cases. It follows from the definitions that $\x_j$ and
$Q_{j,j}$ are isomorphic.

\smallskip
Consider the sequences ($r\ge 0, s>0, r+s<2n-1$)
\begin{equation}
0\ra Q_{j+2r,j}\ra Q_{j+2r+2s,j}\ra Q_{j+2r+2s,j+2r+2}\ra 0,
\label{qqq}
\end{equation}
where the first map is multiplication by
$x_1^s$ and the second one is a factorization. It can be checked
that these sequences are exact.
As an important special case of~(\ref{qqq}), we obtain the following
exact sequences:
\begin{gather}
0\ra Q_{j+2r-2,j}\ra Q_{j+2r,j}\ra \x_{j+2r}\ra 0\label{sur}\quad\text{and}\\
0\ra \x_j\ra Q_{j+2r,j}\ra Q_{j+2r,j+2}\ra 0.\label{inj}
\end{gather}
From~(\ref{sur}) it follows that the module $Q_{j+2k,j}$ has a filtration
\begin{equation}
 0\subset Q_{j,j}\subset Q_{j+2,j}\subset Q_{j+4,j}\subset\ldots\subset  
Q_{j+2k-2,j}\subset Q_{j+2k,j}
\label{filtr}
\end{equation}
with quotients isomorphic to $\x_j, \x_{j+2}, \x_{j+4}, \ldots ,\x_{j+2k-2},
\x_{j+2k}$.

Consider the sequence
\begin{equation}
0\ra \x_j\xrightarrow{x_2} \left(A/{(x_0,x_1)}\right)(j{+}2n{-}1)\ra%
\x_{j+2n-1}\ra 0,
\label{xqx}
\end{equation}
where the first map is multiplication by $x_2$
and the second one is a factorization.
Since $A/{(x_0,x_1)}\cong \C[x_2,x_3]/{(x_2^2)}$, this sequence is exact.

\begin{lemma}\label{kozhul}
There exist exact sequences
\begin{gather*}
0\ra A(-3)\ra A(-1)\oplus A(-2)\ra A\ra A/{(x_0,x_1)}\ra 0\quad\text{and}\\
0\ra A(-2n)\ra A(-1)\oplus A(-2n+1)\ra A\ra Q_{0,-4n+4}\ra 0.
\end{gather*}
\end{lemma}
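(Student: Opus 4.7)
The plan is to recognize both sequences as Koszul complexes for length-two regular sequences in the graded algebra $A$. The first sequence terminates in $A/(x_0, x_1)$ by definition, so no reduction is needed. For the second, I would first identify $Q_{0, -4n+4}$ with $A/(x_0, x_2)$: by the definition of $Q_{\bullet,\bullet}$ (with $j+2r = 0$, hence $r+1 = 2n-1$) one has $Q_{0, -4n+4} = A/(x_0, x_1^{2n-1}, x_2)$ with trivial shift, and the defining relation $x_0 x_3 + x_1^{2n-1} + x_2^2 = 0$ in $A$ yields $x_1^{2n-1} = -x_0 x_3 - x_2^2 \in (x_0, x_2)$, collapsing the ideal to $(x_0, x_2)$.

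Once both cokernels are expressed as $A/(f_1, f_2)$, I would simply write down the standard graded Koszul complex
\[
0 \to A(-\deg f_1 - \deg f_2) \xrightarrow{\binom{f_2}{-f_1}} A(-\deg f_1) \oplus A(-\deg f_2) \xrightarrow{(f_1,\ f_2)} A \to A/(f_1, f_2) \to 0,
\]
and observe that with $\deg x_0 = 1$, $\deg x_1 = 2$, $\deg x_2 = 2n-1$ the shifts agree precisely with those in the statement.

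The only real obstacle is exactness, which reduces to checking that $(x_0, x_1)$ and $(x_0, x_2)$ form regular sequences on $A$. For this I need two irreducibility claims. First, $A$ is a domain: viewed as a polynomial in $x_3$, the relation $f = x_0 x_3 + x_2^2 + x_1^{2n-1}$ is linear with leading coefficient $x_0$, so any non-trivial factorization would force one factor to be (a scalar multiple of) $x_0$, contradicting $x_0 \nmid x_2^2 + x_1^{2n-1}$. Hence $x_0$ is a non-zero-divisor on $A$. Second, $A/(x_0) \cong \C[x_1, x_2, x_3]/(x_2^2 + x_1^{2n-1})$ is again a domain: as a quadratic in $x_2$, the polynomial $x_2^2 + x_1^{2n-1}$ has discriminant $-4 x_1^{2n-1}$, which is not a square in $\C[x_1, x_3]$ since $2n-1$ is odd. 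Therefore both $x_1$ and $x_2$ are non-zero-divisors on $A/(x_0)$, both Koszul complexes are acyclic, and the lemma follows.
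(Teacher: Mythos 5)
Your proof is correct and follows the same route as the paper: both sequences are identified as the Koszul complexes of the regular pairs $(x_0,x_1)$ and $(x_0,x_2)$, with $Q_{0,-4n+4}\cong A/(x_0,x_2)$ via the defining relation $x_1^{2n-1}=-x_0x_3-x_2^2$. The only difference is that you explicitly verify regularity (by checking that $A$ and $A/(x_0)$ are domains), which the paper simply asserts.
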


\begin{proof}                 
We can take the Kozhul complexes for $(x_0,x_1)$ and $(x_0,x_2)$ as required 
sequences.
Indeed, the pairs $(x_0,x_1)$ and $(x_0,x_2)$ are regular;
therefore the Kozhul complexes are resolutions of $A/{(x_0,x_1)}$
and $A/{(x_0,x_2)}\cong A/{(x_0,x_1^{2n-1},x_2)}\cong Q_{0,-4n+4}$.
The latter isomorphisms follow from the definitions of $A$ and $Q_{0,-4n+4}$.  
\end{proof}
                                                             
\subsection{$\Ext$ groups for basic modules.}
\label{sectionExt}
In this section we calculate $\Ext$ groups for objects
$A(i)$ and $\x_j$ in the category $\qgr(A)$. 

Recall that $\qgr(A)$ is the quotient category $\gr(A)/\tors A$,
where $\gr(A)$ is the category of finitely generated graded $A$-modules and
$\tors A\subset\gr(A)$ is its dense subcategory of torsion modules.
Similarly, let $\Gr(A)$ be the category of all graded $A$-modules,
$\Tors A\subset\Gr(A)$ its subcategory of torsion modules,
and $\QGr(A)$ the quotient category $\Gr(A)/\Tors A$.
Since the category $\QGr(A)$ has enough injective objects, one can define
the functors $\Ext^i(M,\cdot)$ on $\QGr(A)$ as derived functors of $\Hom(M,\cdot)$.
This also defines functors $\Ext^i$ on $\qgr(A)$, see~\cite{AZ}
for details.

The following well-known (see, e.g.,~\cite{Ba}) result about 
$\Ext$ groups on weighted projective spaces is needed for the sequel.

\begin{predl}\label{forpolinoms}
Suppose $B=\C[x_0,x_1,\ldots,x_m]$ is the graded algebra of polinomials 
in variables $x_i$ of degrees $a_i$ respectively;
then the following equalities hold in $\qgr(B)$ :
\begin{enumerate}
\item[] $\Hom (B,B(k))=B_k$ for all $k\ge 0$, $\Hom (B,B(k))=0$ otherwise;
\item[] $\Ext^i (B,B(k))=0$ for all $k$ and $i$ such that $1\le i \le m-1$;
\item[]	$\Ext^m (B,B(k))=B_{-s-k}^*$ for all $k\le -s$, where $s=\sum a_i$;
$\Ext^m (B,B(k))=0$ in other cases.
\end{enumerate}
Moreover, there exist natural isomorphisms for all $i=0,\ldots,m$:

\medskip
$\Ext^i(B,N)\cong \Ext^{m-i}(N,B(-s))^*,\quad\text{where } s=\sum a_i$.
\end{predl}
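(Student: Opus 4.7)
\emph{Proof proposal.} The plan is to reduce everything to local cohomology at the irrelevant ideal $\mathfrak{m}=(x_0,\ldots,x_m)\subset B$ and then exploit the fact that $B$ is Gorenstein with canonical module $\omega_B=B(-s)$. This is the standard path for weighted projective spaces, so I expect the effort to be bookkeeping rather than anything genuinely new.

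From \cite{AZ} we have the natural identification
$\Ext^i(B, N(k)) \cong H^{i+1}_\mathfrak{m}(N)_k$ for $i\ge 1$ and any finitely generated graded $B$-module $N$, together with the four-term sequence
$$0\to H^0_\mathfrak{m}(N)_k \to N_k \to \Hom(B, N(k))\to H^1_\mathfrak{m}(N)_k\to 0.$$
Applied to $N=B$: the polynomial ring $B$ is Cohen--Macaulay of dimension $m+1$, so $H^i_\mathfrak{m}(B)=0$ for $i\le m$. This immediately yields $\Hom(B,B(k))=B_k$ together with the vanishing $\Ext^i(B,B(k))=0$ for $1\le i\le m-1$, proving (1) and (2).

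For (3), compute $H^{m+1}_\mathfrak{m}(B)$ via the graded Čech complex associated with $\Spec B\setminus\{0\}=\bigcup_i D(x_i)$: its top cokernel is spanned by Laurent monomials $x_0^{-\a_0-1}\cdots x_m^{-\a_m-1}$ with $\a_i\ge 0$, and such a monomial sits in degree $-s-\sum_i a_i\a_i$. Hence monomials of degree $k$ in $H^{m+1}_\mathfrak{m}(B)$ are in bijection with monomials of degree $-s-k$ in $B$, and the multiplication pairing
$$B_{-s-k}\otimes H^{m+1}_\mathfrak{m}(B)_k\to H^{m+1}_\mathfrak{m}(B)_{-s}\cong\C$$
is perfect, supplying the required isomorphism $\Ext^m(B,B(k))\cong B_{-s-k}^*$.

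For (4), apply graded local duality for the Gorenstein ring $B$: for any finitely generated graded $N$ there is a natural isomorphism $H^{m+1-i}_\mathfrak{m}(N)_k^*\cong\Ext^i_B(N,B(-s))_{-k}$. Converting the left-hand side back to $\qgr$-Ext via the identification above produces the required perfect pairing between $\Ext^i(B,N)$ and $\Ext^{m-i}(N,B(-s))$. The main obstacle is purely bookkeeping: aligning the shift conventions for $\mathfrak{m}$-local cohomology, the twist by $\omega_B=B(-s)$, and the boundary cases $i=0,m$, where the four-term sequence intervenes instead of a clean isomorphism. A small secondary check is to confirm that the classical $H^i$-vanishing for polynomial rings goes through unchanged for arbitrary positive weights $a_i$, which is immediate from the Čech computation above.
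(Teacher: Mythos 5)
The paper does not prove this proposition at all: it is stated as a well-known fact about weighted projective spaces with a citation to Baer's paper, so there is no internal argument to compare yours against. Your route --- the Artin--Zhang identification $\Ext^i_{\qgr}(B,N(k))\cong H^{i+1}_{\mathfrak m}(N)_k$ for $i\ge 1$, the \v{C}ech computation of $H^{m+1}_{\mathfrak m}(B)$, and graded local duality for the Gorenstein ring $B$ with $\omega_B=B(-s)$ --- is the standard proof and is correct. The one step you wave at but should make explicit is the passage from the graded-module $\Ext^{m-i}_B(N,B(-s))_0$ delivered by local duality to the $\Ext^{m-i}$ computed in $\qgr(B)$ that appears in the statement: this uses $\Ext^j_{\qgr}(N,B(-s))\cong\varinjlim_n\Ext^j_{\gr}(N_{\ge n},B(-s))$ together with the vanishing of $\Ext^j_B(T,B)$ for finite-dimensional $T$ and $j\le m$, i.e.\ the depth of $B$, and it is exactly at $i=0$ (equivalently $j=m$) that this comparison and your four-term sequence have to be matched up; that is the only place where the ``bookkeeping'' is not entirely routine.
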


The next proposition is the main result of the subsection.
\begin{predl}\label{Ext}
Let $A$ be the algebra defined in subsection~\ref{defA}.
Then the following equalities hold in $\qgr(A)$ :                 
\begin{enumerate}
\item[(a)] $\Hom(A(k),A(l))=A_{l-k}$ for all $k\le l$, 

$\Ext^2(A(k),A(l))=A_{k-l-(2n+1)}^*$ if $k-l\ge 2n+1$, 

$\Ext^i(A(k),A(l))=0$ otherwise.

\item[(b)] $\Hom(A(k),\x_j)=\C$ if $k\equiv j \pmod{4n-3}$, 

$\Ext^i(A(k),\x_j)=0$ in other cases.

\item[(c)] $\Ext^2(\x_k, A(j))=\C$ if $j \equiv k-(2n+1) \pmod{4n-3}$,

$\Ext^i(\x_k, A(j))=0$ in other cases.

\item[(d)] $\Hom(\x_k,\x_j)=\C$ if $k \equiv j \pmod{4n-3}$,

$\Ext^1(\x_k,\x_j)=\C$ if $j \equiv k-2 \pmod{4n-3}\quad\text{or}\quad 
j\equiv k-(2n-1) \pmod{4n-3}$,

$\Ext^2(\x_k,\x_j)=\C$ if $j \equiv k-(2n+1) \pmod{4n-3}$,

$\Ext^i(\x_k,\x_j)=0$ otherwise.
\end{enumerate}
\end{predl}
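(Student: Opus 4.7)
The plan is to establish the four parts of Proposition~\ref{Ext} in turn, leveraging Proposition~\ref{forpolinoms} on the ambient weighted projective space, the exact sequence~(\ref{xqx}), and the Koszul resolutions of Lemma~\ref{kozhul}. For part (a), I would tensor the short exact sequence $0\to B(-d)\xrightarrow{f}B\to A\to 0$ by $B(l-k)$, view it in $\qgr(B)$, and take cohomology on the ambient $\P(1,2,2n-1,4n-3)$. The resulting long exact sequence, combined with Proposition~\ref{forpolinoms} (which provides the vanishing of middle cohomology and explicit formulas for $H^0$ and $H^3$), gives the three cases of (a) immediately. This is compatible with Serre duality on the smooth projective stack $X$, which has $\omega_X\cong\O(-2n-1)$ since $d-\sum a_i=-(2n+1)$; I will use the duality $\Ext^i(M,N)\cong\Ext^{2-i}(N,M(-2n-1))^*$ below.

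For (b), the Hom computation is direct: $\Hom_{\qgr(A)}(A(k),\x_j)$ equals the degree $-k$ part of the saturation of $\x_j=\C[x_3](j)$, which is $\C$ exactly when $k\equiv j\pmod{4n-3}$ and zero otherwise. For the higher Ext's I would apply $\Hom(A(k),\cdot)$ to the exact sequence~(\ref{xqx}) with $m=j-2n+1$, namely $0\to\x_{j-2n+1}\to A/(x_0,x_1)(j)\to\x_j\to 0$, and use the hypercohomology spectral sequence from the Koszul resolution of $A/(x_0,x_1)$ (Lemma~\ref{kozhul}), combined with (a), to compute $\Ext^i(A(k),A/(x_0,x_1)(j))$. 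An induction on $j$ modulo $4n-3$ then forces $\Ext^i(A(k),\x_j)=0$ for $i>0$. Part (c) follows from (b) by Serre duality.

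For (d) the cleanest route is local: near the singular point $P=(0{:}0{:}0{:}1)$, the surface $X$ is the stacky quotient of an affine plane by $\mu_{4n-3}$ acting on the slice coordinates $x_1,x_2$ with weights $2,2n-1$ (after using $f=0$ to eliminate $x_0$). The modules $\x_j$ correspond to the $4n-3$ characters of $\mu_{4n-3}$ supported at $P$, and the Koszul resolution of the residue field realises $\Ext^i(\x_k,\x_j)$ as the $(j-k)$-character component of $\Lambda^iT^*_PX$, where $T^*_PX$ has characters $2$ and $2n-1$; this produces exactly the four cases in (d). The same conclusion can be reached algebraically by applying $\Hom(\x_k,\cdot)$ to~(\ref{xqx}), using part (c) and the Koszul resolution via a hypercohomology spectral sequence, and then carefully tracking residues modulo $4n-3$; this bookkeeping, and in particular the verification that no accidental extensions survive, will be the main obstacle.
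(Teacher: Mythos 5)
Your overall architecture matches the paper's for parts (a), (c) and (d), but diverges genuinely on (b). For (a) you and the paper do the same thing: resolve $A(l)$ by $0\to B(l-d)\to B(l)\to A(l)\to 0$ and apply Proposition~\ref{forpolinoms}; the paper makes explicit the step you leave implicit, namely that $\Ext_{\QGr(A)}(A,M)\cong\Ext_{\QGr(B)}(B,M)$ (its Lemma~\ref{basechange}, proved by a derived adjunction). For (c) you both use Serre duality with $\omega\cong\O(d-\sum a_i)=\O(-2n-1)$; the paper does not simply invoke it but proves it in the $\QGr$ setting (Lemma~\ref{duality}, via the right adjoint $R\s^!$ and duality on the ambient weighted projective space), so if you want a self-contained argument you must either reproduce that or justify Serre duality for the smooth quotient stack $(\Spec A\setminus\{0\})/\C^*$. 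Your local route for (d) is, after translation, exactly the paper's: the paper localizes at $x_3$, identifies $\x_j$ with $s_*(\X(j))$, uses the adjunction $\Ext_{\QGr(A)}(s_*\X,s_*\X(j))\cong\Ext_{A[x_3^{-1}]}(\X,\X(j))$, and runs the Koszul resolution of $\X$ over $\C[x_1,x_2,x_3^{\pm1}]$ with twists $-2$ and $-(2n-1)$; your ``characters of $\Lambda^iT_P^*$ under $\mu_{4n-3}$ acting with weights $2,2n-1$'' is the same computation (and your weights are correct: $(2,2n-1)\sim(1,4)$ mod $4n-3$), but you should justify the local-to-global step ($s^*s_*\cong\mathrm{id}$ on sheaves supported in the chart), which is what the paper's formula~(\ref{adj}) provides. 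The real difference is (b): the paper again localizes and observes that $\Hom(A[x_3^{-1}],N)=N_0$ is exact, which kills all higher $\Ext$ in one line; you instead run a global induction through~(\ref{xqx}) and the Koszul resolution of $A/(x_0,x_1)$. That route does close, but only with two ingredients you do not name: (i) a priori vanishing of $\Ext^{\ge 3}$ (available from Lemma~\ref{basechange} plus Proposition~\ref{forpolinoms}), without which the chain of isomorphisms $\Ext^i(A,\x_j)\cong\Ext^{i+1}(A,\x_{j-2n+1})$ for $i\ge1$ coming from the long exact sequence never terminates; and (ii) the observation that the resulting recursion $a^0_j+a^0_{j-(2n-1)}=\dim\Hom(A,(A/(x_0,x_1))(j))$ over $\Z/(4n-3)$ has a unique solution because the cycle length $4n-3$ is odd. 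With those supplied, your (b) is a valid, if considerably longer, alternative to the paper's localization argument; the paper's method buys uniform brevity for (b) and (d) at the cost of setting up the adjunctions once.
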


\begin{proof}
Obviously, $\Ext^i(M,N)\cong \Ext^i(M(j),N(j))$, therefore 
it suffices to consider the case $k=0$.
First we prove the following lemma.
\begin{lemma}\label{basechange}
Suppose $B=\C[x_0,\ldots,x_m]$ is the polinomial algebra
with grading $\deg(x_i)=a_i$,
$f\in B$ is a homogeneous polinomial of degree $d$,
and $A=B/(f)$ is the quotient algebra. Then for any $A$-module $M$
there exists a natural isomorphism
$\Ext_{\QGr(A)}^i(A,M)\cong \Ext_{\QGr(B)}^i(B,M)$.
\end{lemma}

\begin{proof}  

Define the  functors
$\s'^*\colon\Gr(B){\to}\Gr(A)$ by $\s'^*(N)=N\otimes_BA$ and 
$\s'_*\colon\Gr(A)\to \Gr(B)$ by $\s'_*(M)=M$. Then $\s'^*$ is the left
adjoint functor to $\s'_*$. Functors  $\s'^*$ and $\s'_*$ can be extended 
to adjoint functors $\s^*$ and $\s_*$, defined on 
categories $\QGr(B)$ and $\QGr(A)$.
Since $\QGr(B)$ has enough $\s^*$-acyclic objects
(for any $M\in\QGr(B)$ there exists an epimorphism
$\oplus_{\alpha} B(k_{\alpha})\ra M$), 
the derived functor $L\s^*$ is well-defined; since $\s_*$ is exact,
the derived functor $R\s_*=L\s_*$ is well-defined.
According to~\cite[lemma 15.6]{Ke}, the functors $L\s^*$ and $R\s_*$  
are adjoint, i.e.,
for any objects $X\in \D^+(\QGr(A))$ and $Y\in \D^-(\QGr(B))$ 
we have
\begin{equation}
\Hom_{\D(\QGr(A))}(L\s^*(Y),X)\cong \Hom_{\D(\QGr(B))}(Y,R\s_*(X)).
\label{adj3}
\end{equation}

The required isomorphism is obtained by applying the above
formula to the (-j)-complex $X=M[j]$ and the  0-complex $Y=B[0]$.
\end{proof}
     
Now we can prove proposition~\ref{Ext}.

(a)
From the above lemma it follows that 
$\Ext^i_{\QGr(A)}(A,A(l))\cong \Ext^i_{\QGr(B)}(B,A(l))$.
We compute groups $\Ext^i_{\QGr(B)}(B,A(l))$ by applying
the functor $\Hom(B,\cdot)$ to the exact sequence 
$$0\ra B(l{-}(4n{-}2))\xrightarrow{\cdot f(x)} B(l)\ra A(l)\ra 0.$$
Groups $\Ext^i(B,B(j))$ are calculated in proposition~\ref{forpolinoms};
we obtain the result by straightforward calculations.

(b,d) 
Let $D_{x_3}=\{x_3{\ne}0\}$ be the neighbourhood of the singular point $P\in X$,
let $s\colon D_{x_3}\ra X$ be its embedding into $X$; $s$
corresponds to the localization morphism $A\ra A[x_3^{-1}]$.

Consider the following functors associated with the embedding $s$.
The direct image functor $s_*\colon \Gr(A[x_3^{-1}])\ra \QGr(A)$ is defined 
by $s_*(M)=M$ 
and the inverse image functor $s^*\colon \QGr(A)\ra \Gr(A[x_3^{-1}])$ is the 
localization. Note that the localization of a torsion module
is zero; this shows that $s^*$ is well-defined.
Since functors $s_*$ and  $s^*$ are exact and $s_*$ is the
 right adjoint functor to $s^*$, we have natural isomorphisms
\begin{equation}
R^i\Hom_{\QGr(A)}(M,s_*(N))\cong R^i\Hom_{A[x_3^{-1}]}(s^*(M),N) \label{adj}
\end{equation}
for any $A$-module $M$ and any $A[x_3^{-1}]$-module $N$.

Now consider the module $\X=A[x_3^{-1}]/{(x_1,x_2)}\cong\C[x_3,x_3^{-1}]$ 
over the algebra $A[x_3^{-1}]$.
Note that the submodule of $s_*(\X)$ generated by its
components of nonnegative degree, is isomorphic to the module $\x$. 
Since the quotient module $s_*(\X)/\x$ is a torsion module,
we see that $\x$ and $s_*(\X)$ are isomorphic in the category $\QGr(A)$. 
For the same reason, $\x(j)\cong s_*(\X(j))$.

(b) From formula~(\ref{adj}) for $M=A$ and $N=\X(j)$ it follows that
$\Ext^i_{\QGr(A)}(A,\x_j)\cong \Ext^i_{\Gr(A[x_3^{-1}])}(A[x_3^{-1}],\X(j))$.
But the functor $\Hom(A[x_3^{-1}],\cdot)$ is exact on the category of 
graded \mbox{$A[x_3^{-1}]$-modules}, in fact, we have
$\Hom(A[x_3^{-1}],N)=N_0$. This makes further calculation trivial.

(d) Now we compute the groups $\Ext(\x,\x_j)$.
Since $s^*s_*\X\cong\X$, applying formula~(\ref{adj}) to 
$M=s_*(\X)$, $N=\X(j)$ we obtain  
$\Ext^i_{\QGr(A)}(\x,\x_j)\cong \Ext^i_{\Gr(A[x_3^{-1}])}(\X,\X(j))$.

Note that the algebra $A[x_3^{-1}]$ is isomorphic to 
$\C[x_1,x_2,x_3,x_3^{-1}]$ and the following Kozhul  complex
is a free resolution of the module
$\X=A[x_3^{-1}]/(x_1,x_2)$ 
$$ 
0\ra A[x_3^{-1}](-2n-1)\ra A[x_3^{-1}](-2n+1)\oplus A[x_3^{-1}](-2)\ra %
A[x_3^{-1}]\ra \X\ra 0. 
\label{resolutionX}
$$
According to the definition of $\Ext(\X,\X(j))$, we apply the functor 
$\Hom(\cdot,\X(j))$ to the above resolution and take homologies.
Using the results of part (b), we get:
$$
\Hom(\X,\X)=\C,\;
\Ext^1(\X,\X(-2))=\C,\;
\Ext^1(\X,\X(-2n+1))=\C,  \;
\Ext^2(\X,\X(-2n-1))=\C,
$$
other $\Ext$ groups are zero. Part (d) of the proposition is proved.

(c) 
The required equalities follow from part (b) and the following lemma.

\begin{lemma}\label{duality}
Suppose $B=\C[x_0,\ldots,x_m]$ is the polinomial algebra
with grading $\deg(x_i)=a_i$, $f\in B$ is a homogeneous polinomial
of degree $d$, and $A=B/(f)$ is the quotient algebra. Then
for any $A$-module $M$ there exists a natural isomorphism
$\Ext^i_{\QGr(A)}(A,M)\cong \Ext^{m-1-i}_{\QGr(A)}(M,A(d-s))^{*}$, 
where $s=\sum a_i$.
\end{lemma}

\begin{proof}
The proof is by reduction to duality for weighted projective space.
We have
\begin{equation}
\Ext^i_{\QGr(A)}(A,N)\cong\Ext^i_{\QGr(B)}(B,N)\cong%
\Ext^{m-i}_{\QGr(B)}(N,B(-s))^*.
\label{chain}
\end{equation}
The first isomorphism above follows from lemma~\ref{duality} and  
the second one follows from proposition~\ref{forpolinoms}.
                             
Define the functor $\s'^!\colon\Gr(B)\to\Gr(A)$ by 
$\s'^!(N)=\oplus_{k\in \Z} \Hom_{\Gr(B)}(A,N(k))[-k]=\HHom_B(A,N)$. 
Then $\s'^!$ is the right adjoint to $\s'_*$.
The functors $\s'_*$ and $\s'^!$ can be extended to adjoint functors
$\s_*$ and $\s^!$, defined on the categories $\QGr(A)$ and $\QGr(B)$.
Since $\QGr(B)$ has enough injective objects,
the derived functor $R\s^!: \D^+(\QGr(B))\to\D^+(\QGr(A))$ is well-defined.
From~\cite[lemma 15.6]{Ke} it follows that the derived functors
$L\s_*$ and $R\s^!$ are adjoint, i.e., for any
objects $X\in \D^-(\QGr(A))$ and $Y\in \D^+(\QGr(B))$ we have
\begin{equation}
\Hom_{\D(\QGr(B))}(L\s_*(X),Y)\cong \Hom_{\D(\QGr(A))}(X,R\s^!(Y))
\label{adj4}
\end{equation}
If we put $X=N[0]$ and $Y=B(-s)[j]$ in formula~(\ref{adj4}), we obtain
\begin{equation}
\Ext^j_{\QGr(B)}(N,B(-s))\cong\Hom_{\D(\QGr(A))}(N[0],R\s^!(B(-s)[j])).
\label{chain2}
\end{equation}
We need to compute $R^i\s^!(B(-s)[j])=R^{i+j}(\s^!B(-s))=R^{i+j}\HHom(A,B(-s))$.

First we calculate the groups $R^i\HHom(B(r),B(-s))$ for any $r$. 
From definitions it follows that
$R^i\HHom(B(r),B(-s))=\oplus_{k\in \Z} \Ext^i_{\QGr(B)}(B(r),B(-s+k))[-k]$.
Suppose $i>0$. Then $\Ext^i(B(r),B(-s+k))=0$ for $k\gg 0$ by 
proposition~\ref{forpolinoms}. Hence the object
$R^i\HHom(B(r),B(-s))$ equals zero in the quotient category $\QGr(B)$.
For $i=0$ we have $\HHom(B(r),B(-s))=B(-s-r)$ by proposition~\ref{forpolinoms}.

Now consider the exact sequence 
$0 \ra B(-d) \xrightarrow{\cdot f} B \ra A \ra 0$.
Applying the functor $\HHom(\cdot,B(-s))$ to this sequence, we obtain
a long exact sequence of derived functors. 
We already know terms $R^i\HHom(B,B(-s))$ and $R^i\HHom(B(-d),B(-s))$; 
finally we get
$$
\HHom(A,B(-s))=0,\quad
R^1\HHom(A,B(-s))=A(d-s),\quad
R^i\HHom(A,B(-s))=0\quad\text{for } i>1.
$$
Therefore, the object $R\s^!(B(-s))\in\D^+(\QGr(A))$ is 
the $1$-complex $A(d-s)[-1]$.

Combining~(\ref{chain}) and~(\ref{chain2}), we have
$$\Ext^i_{\QGr(A)}(A,N)\cong\Hom_{\D(\QGr(A))}(N[0],A(d-s)[m-i-1])^*%
\cong\Ext^{m-1-i}_{\QGr(A)}(N,A(d-s))^*.$$
This completes the proof of the lemma.
\end{proof}

Proposition~\ref{Ext} is completely proved. 
\end{proof}

\begin{lemma}\label{compos}
The following maps, given by Yoneda multiplication: 
\begin{gather*}
\Hom(A(j),\x_j)\otimes\Ext^2(\x_{j+2n+1},A(j))\ra \Ext^2(\x_{j+2n+1},\x_j)
\quad\text{and}\\
\Ext^2(\x_{j+2n+1},A(j))\otimes\Hom(A(j+2n+1),\x_{j+2n+1})\ra 
\Ext^2(A(j+2n+1),A(j))
\end{gather*}
are nontrivial.
\end{lemma}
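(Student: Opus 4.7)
All the Hom- and Ext-groups listed in the statement are one-dimensional by Proposition~\ref{Ext}; the two Yoneda compositions are therefore $\C$-linear maps $\C\otimes\C\to\C$, and nontriviality is the same as being nonzero. By shifting we may assume $j=0$ throughout.

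The plan is to deduce both nontrivialities from Serre duality. Extending Lemma~\ref{duality} to the endofunctor $S=-\otimes A(-2n-1)[2]$ on $\D^b(\qgr(A))$, we will use that for every pair of objects $X,Y$ there is a nondegenerate Serre pairing
$$\Hom(X,Y)\otimes\Hom(Y,SX)\to\C,\qquad (\alpha,\beta)\mapsto\mathrm{tr}(\beta\circ\alpha),$$
where $\mathrm{tr}\colon\Hom(X,SX)\to\C$ is the canonical trace supplied by Serre duality.

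For the first composition I would take $X=\x_{2n+1}$, so that $SX=\x_{2n+1}(-2n-1)[2]=\x[2]$, and choose nonzero $\eta\in\Ext^2(\x_{2n+1},A)=\Hom(X,A[2])$ and $\phi\in\Hom(A,\x)=\Hom(A[2],SX)$. Unpacking definitions, the Serre pairing $\langle\eta,\phi\rangle$ is exactly the trace of the Yoneda composition $\phi[2]\circ\eta\in\Ext^2(\x_{2n+1},\x)$; since the pairing is nondegenerate and both inputs are nonzero, $\phi[2]\circ\eta\ne 0$, which is the required nontriviality. The second composition is treated symmetrically with $X=A(2n+1)$, $SX=A(2n+1)(-2n-1)[2]=A[2]$, inputs $\phi'\in\Hom(A(2n+1),\x_{2n+1})=\Hom(X,\x_{2n+1})$ and $\eta\in\Ext^2(\x_{2n+1},A)=\Hom(\x_{2n+1},SX)$: the Serre pairing $\langle\phi',\eta\rangle$ is the trace of $\eta\circ\phi'\in\Ext^2(A(2n+1),A)$, and nondegeneracy forces it to be nonzero.

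The main obstacle will be to verify that $S$ really is a Serre functor on the whole of $\D^b(\qgr(A))$, whereas Lemma~\ref{duality} addresses only the case where the first argument of $\Ext$ is $A$. I expect this to follow either by re-running the adjunction argument of that lemma while keeping both variables free (the computation $R\s^!(B(-s))\cong A(d-s)[-1]$ in its proof already provides the right adjoint data), or, more conceptually, by invoking general Serre duality on the smooth Deligne--Mumford stack whose category of coherent sheaves is modelled by $\qgr(A)$ and whose dualising sheaf is $\O_X(-2n-1)$; the compatibility of this Serre functor with Proposition~\ref{Ext} can be checked case by case on the generators $A(k)$ and $\x_k$.
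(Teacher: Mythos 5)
Your argument is correct in substance but takes a genuinely different route from the paper. You reduce both nontrivialities to the nondegeneracy of the Serre pairing for the functor $S=-\otimes A(-2n-1)[2]$, and your bookkeeping is right: $S(\x_{2n+1})=\x[2]$ and $S(A(2n+1))=A[2]$, all four groups involved are one-dimensional by Proposition~\ref{Ext}, the traces live in $\Ext^2(\x_{2n+1},\x)\cong\C$ and $\Ext^2(A(2n+1),A)\cong\C$, so nondegeneracy forces the compositions of nonzero elements to be nonzero. The load-bearing step you leave open is exactly the one you flag: Lemma~\ref{duality} only gives the duality $\Ext^i(A,M)\cong\Ext^{1-i+m-2}(M,A(d-s))^*$ with first argument $A$, and upgrading this to a bifunctorial Serre functor on all of $\D^b(\qgr(A))$ (naturality in the \emph{first} variable is the nontrivial part) requires either citing Serre duality for the smooth proper quotient stack $[(\Spec A\setminus\{0\})/\C^*]$, or for noncommutative projective schemes over Gorenstein algebras; this is true and standard, but it is a real external input, not a formal consequence of what the paper proves. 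The paper avoids it entirely with an elementary argument: the generator of $\Hom(A(j),\x_j)$ factors as $A(j)\ra (A/(x_0,x_1))(j)\ra\x_j$, and applying $\Hom(\x_{j+2n+1},\cdot)$ to the Koszul resolution of $(A/(x_0,x_1))(j)$ from Lemma~\ref{kozhul} and to the sequence~(\ref{xqx}), together with the vanishings of Proposition~\ref{Ext}, shows each of these two maps induces an isomorphism on $\Ext^2(\x_{j+2n+1},-)$, whence the Yoneda product with the generator is itself an isomorphism $\C\to\C$. Your approach is slicker and more conceptual once Serre duality is available (and would prove many such nontrivialities at once); the paper's is longer but self-contained. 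If you keep your route, state and justify the Serre functor claim explicitly rather than leaving it as an expectation.
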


\begin{proof} 
Let us prove that the first map is nontrivial.
For the second map the proof is analogous.

Consider exact sequences from subsection~\ref{defA} twisted by $j$: 
\begin{gather*}
0\ra A(j-3)\ra A(j-1)\oplus A(j-2)\ra A(j)\ra%
\left(A/{(x_0,x_1)}\right)(j)\ra 0\quad\text{and} \\
0\ra \x_{j+2n-2}\ra \left(A/{(x_0,x_1)}\right)(j)\ra \x_j\ra 0.
\end{gather*}
We claim that the quotient epimorphisms 
$A(j)\ra \left(A/{(x_0,x_1)}\right)(j)$ and
$\left(A/{(x_0,x_1)}\right)(j)\ra \x_j$
induce isomorphisms between groups
$\Ext^2(\x_{j+2n+1},M)$, where
$M=A(j)$, $\left(A/{(x_0,x_1)}\right)(j)$ and $\x_j$.
To check this, it suffices to apply the functor $\Hom(\x_{j+2n+1},\cdot)$
to the above sequences and to use proposition~\ref{Ext}.
\end{proof}

\section{Construction of exceptional collections.}
\label{sectionEC}

In this section we construct full exceptional collections
in the bounded derived category of $\qgr(A)$, where $A$ is the algebra,
defined in subsection~\ref{defA}.

Let $G_{j}=(x_0,x_1^{n-1},x_2)(j)$ and $H_{j}=(x_0,x_1,x_2)(j)$
be the twisted ideals in~$A$.
           
\begin{theorem}\label{EC1}
The collection of objects
$$
(A, A(1), A(2),\ldots, A(2n-1), 
G_{2n}, A(2n), H_{2n+1}, \x_{2n}, \x_{2n-1},
\ldots,\x_6, \x_5)
$$
in $\qgr(A)$ is exceptional for $n>2$.
\end{theorem}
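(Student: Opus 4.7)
The plan is to verify the defining conditions of an exceptional collection pair by pair. First I would handle the subcollection $(A, A(1), \ldots, A(2n), \x_{2n}, \x_{2n-1}, \ldots, \x_5)$: for $2n \ge k > l \ge 0$ the vanishing $\Hom^{\bullet}(A(k), A(l)) = 0$ follows directly from Proposition~\ref{Ext}(a) since $k-l \le 2n < 2n+1$; the cross terms $\Ext^{\bullet}(\x_j, A(k)) = 0$ for $5 \le j \le 2n$, $0 \le k \le 2n$ reduce by Proposition~\ref{Ext}(c) to checking that $k \equiv j - (2n+1) \pmod{4n-3}$ has no solution in the relevant range, which is straightforward; and Proposition~\ref{Ext}(d) combined with the hypothesis $n > 2$ shows both exceptionality of each $\x_j$ (the obstruction $2n+1 \equiv 0 \pmod{4n-3}$ occurs only for $n=2$) and the vanishing of $\Hom^{\bullet}(\x_p, \x_q)$ for $5 \le p < q \le 2n$.

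To incorporate $G_{2n}$ and $H_{2n+1}$, the key computational tools are the two short exact sequences
\begin{equation*}
0 \to G_{2n} \to A(2n) \to Q_{2n,4} \to 0, \qquad 0 \to H_{2n+1} \to A(2n+1) \to \x_{2n+1} \to 0,
\end{equation*}
where $Q_{2n,4} = (A/(x_0, x_1^{n-1}, x_2))(2n)$, together with the filtration~(\ref{filtr}) of $Q_{2n,4}$ whose subquotients are $\x_4, \x_6, \ldots, \x_{2n}$. Applying $\Hom(-, E)$ and $\Hom(E, -)$ to these sequences and then decomposing $Q_{2n,4}$ via its filtration reduces every required $\Ext$ group involving $G_{2n}$ or $H_{2n+1}$ to those of Proposition~\ref{Ext}.

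The hard part is that in a few pairs the long exact sequence places two consecutive terms isomorphic to $\C$, so vanishing of $\Ext^{\bullet}(G_{2n}, -)$ or $\Ext^{\bullet}(H_{2n+1}, -)$ forces the connecting homomorphism to be an isomorphism. For instance, the vanishing of $\Ext^{\bullet}(H_{2n+1}, A)$ comes down to the map $\Ext^2(\x_{2n+1}, A) \cong \C \to \Ext^2(A(2n+1), A) \cong \C$ induced by the projection $A(2n+1) \to \x_{2n+1}$ being nonzero, which is precisely the content of Lemma~\ref{compos}. The same mechanism (with the same Yoneda pairing, now applied at the level of each $\x_m$ appearing in the filtration for which $k \equiv m - (2n+1) \pmod{4n-3}$) handles the analogous pairs for $G_{2n}$ against $A(k)$.

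Finally, verifying that $G_{2n}$ and $H_{2n+1}$ are themselves exceptional amounts to applying the corresponding short exact sequence on both arguments and reducing $\Ext^{\bullet}(G_{2n}, G_{2n})$ and $\Ext^{\bullet}(H_{2n+1}, H_{2n+1})$ to groups already computed. I expect the principal technical obstacle to be the bookkeeping for $\Ext^{\bullet}(G_{2n}, G_{2n})$, which requires a double use of the $G_{2n}$-sequence together with the filtration of $Q_{2n,4}$ on both sides; however, every nontrivial connecting map is again dictated by Lemma~\ref{compos}, so the cancellations close in the same manner as in the simpler cases, leaving only $\Hom(G_{2n}, G_{2n}) = \C$ and $\Hom(H_{2n+1}, H_{2n+1}) = \C$.
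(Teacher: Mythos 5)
Your proposal is correct in outline and follows essentially the same route as the paper: Proposition~\ref{Ext} disposes of the subcollection $(A,\ldots,A(2n),\x_{2n},\ldots,\x_5)$ (including the correct identification of $n=2$ as the only case where $\Ext^2(\x_j,\x_j)\ne 0$), and the objects $G_{2n}$, $H_{2n+1}$ are handled through the two short exact sequences $0\to G_{2n}\to A(2n)\to Q_{2n,4}\to 0$ and $0\to H_{2n+1}\to A(2n+1)\to\x_{2n+1}\to 0$ together with the filtration~(\ref{filtr}) of $Q_{2n,4}$, with Lemma~\ref{compos} supplying the nontrivial $\C\to\C$ connecting maps (e.g.\ for $\Ext^{\bullet}(H_{2n+1},A)$ and $\Ext^{\bullet}(\x_5,H_{2n+1})$).

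Two points of divergence are worth flagging. First, the paper shortcuts several of your pairwise computations with the mutation formalism: $H_{2n+1}=L_{A(2n+1)}(\x_{2n+1})$ is automatically exceptional, and $G_{2n}\in\langle A(2n),\x_{2n},\x_{2n-2},\ldots,\x_4\rangle$ immediately gives $\Ext^{\bullet}(G_{2n},A(k))=0$ for $k<2n$, $\Ext^{\bullet}(A(2n),G_{2n})=0$, $\Ext^{\bullet}(\x_j,G_{2n})=0$ for even $j$, and $\Ext^{\bullet}(H_{2n+1},G_{2n})=0$; your direct approach must instead track cancellations such as $\Hom(\x_j,\x_j)$ against $\Ext^1(\x_j,\x_{j-2})$ inside the filtration of $Q_{2n,4}$. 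Second, your claim that \emph{every} nontrivial connecting map is dictated by Lemma~\ref{compos} undersells the work: some of the required nonvanishings come instead from the non-splitness of the extensions of type~(\ref{sur}) (this is Lemma~\ref{aboutQ}(c) in the paper), and the key pairing $\Ext^2(Q_{2n,4},A(2n))\otimes\Hom(\x_4,Q_{2n,4})\to\Ext^2(\x_4,A(2n))$ needed for $\Ext^{\bullet}(G_{2n},G_{2n})$ is established by a separate argument (the orthogonality of $Q_{2n,6}$ to $A(2n)$, via $0\to\x_4\to Q_{2n,4}\to Q_{2n,6}\to 0$), which Lemma~\ref{compos} alone does not give you. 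These are additional verifications rather than obstructions, so the plan closes, but they must be carried out explicitly as in steps 4--5 of Lemma~\ref{Gexceptional}.
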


To prove this theorem, we need two lemmas.

Recall that $Q_{j+2k,j}$ denotes the 
quotient module $\left(A/{(x_0,x_1^{k+1},x_2)}\right)(j)$.

\begin{lemma}\label{aboutQ}
For any $0\le k\le n-2$ the following statements hold:
\begin{itemize}
\item[(a)]
$Q_{j+2k,j}[-k]=L_{\langle\x_{j+2k},\x_{j+2k-2},\ldots,\x_{j+2}\rangle}(\x_j)$.

\item[(b)]
$\Ext^i(Q_{j+2k,j},\x_{j+2l})=0$ for all $i$ and $k,l=0,\ldots,n-2$ 
except for two cases:\\
$\Hom(Q_{j+2k,j},\x_{j+2k})=\C$, this space is generated by the epimorphism
\mbox{$Q_{j+2k,j}\ra \x_{j+2k}$};\\
$\Ext^2(Q_{j+2k,j},\x_{j+2n-4})=\C$ if $0\le k\le n-2$.

\item[(c)]
$\Ext^i(\x_{j+2k+2},Q_{j+2k,j})=0$ if $i\ne 1$;
 $\Ext^1(\x_{j+2k+2},Q_{j+2k,j})=\C$. 
A nonzero element of \: $\Ext^1(\x_{j+2k+2},Q_{j+2k,j})$ 
is represented by an extension of type~(\ref{sur}); 
the composition
$\Hom(Q_{j+2k,j}, \x_{j+2k})\otimes\Ext^1(\x_{j+2k+2},Q_{j+2k,j})
\ra \Ext^1(\x_{j+2k+2},\x_{j+2k})$ is nontrivial.

\item[(d)]
For $k<n{-}2$ the collection of objects 
$(\x_{j+2k}, \x_{j+2k-2},\ldots,\x_{j+2},\x_j)$
in $\qgr(A)$ is exceptional. The object $Q_{j+2k,j}=%
L_{\langle\x_{j+2k},\x_{j+2k-2},\ldots,\x_{j+2}\rangle}(\x_j)[k]$ 
is exceptional.
\end{itemize} 
\end{lemma}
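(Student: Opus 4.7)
\emph{Proof plan.} I proceed by simultaneous induction on $k$, handling the four parts in the order (c), (b), (a), (d) at each stage. The base case $k=0$ uses $Q_{j,j}\cong\x_j$, so all four assertions reduce directly to Proposition~\ref{Ext}(d).

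For (c) at level $k$, apply $\Hom(\x_{j+2k+2},-)$ to the short exact sequence~(\ref{sur}) ending in $\x_{j+2k}$. Combining the filtration~(\ref{filtr}) of $Q_{j+2k-2,j}$ with Proposition~\ref{Ext}(d), every group $\Ext^i(\x_{j+2k+2},\x_{j+2l})$ for $l\le k-1$ vanishes: the critical residues $-2$, $-(2n-1)$, $-(2n+1)$ modulo $4n-3$ all fall outside the window $[0,2k]$ when $k\le n-2$. Hence $\Ext^i(\x_{j+2k+2},Q_{j+2k-2,j})=0$, and the long exact sequence collapses to $\Ext^i(\x_{j+2k+2},Q_{j+2k,j})\cong\Ext^i(\x_{j+2k+2},\x_{j+2k})$, which is $\C$ for $i=1$ and zero otherwise. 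The generator is represented by an extension of type~(\ref{sur}), and its Yoneda composition with the quotient $Q_{j+2k,j}\to\x_{j+2k}$ corresponds under the displayed isomorphism to the identity, so it is nontrivial.

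For (b), apply $\Hom(-,\x_{j+2l})$ to~(\ref{sur}). Proposition~\ref{Ext}(d) shows that the only potentially nonzero contributions from $\x_{j+2k}$ occur at $l=k$ (giving $\Hom=\C$) and $l=k-1$ (giving $\Ext^1=\C$). For $l\notin\{k,k-1,n-2\}$ the long exact sequence collapses to the inductive hypothesis for $Q_{j+2k-2,j}$, yielding zero. For $l=k-1$ the connecting homomorphism $\Hom(Q_{j+2k-2,j},\x_{j+2k-2})\to\Ext^1(\x_{j+2k},\x_{j+2k-2})$ is Yoneda multiplication by the class of~(\ref{sur}), whose nontriviality is exactly part (c) at level $k-1$; this kills the potentially dangerous terms. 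What survives is $\Hom(Q_{j+2k,j},\x_{j+2k})=\C$, realized by the quotient, and $\Ext^2(Q_{j+2k,j},\x_{j+2n-4})=\C$, inherited from $Q_{j+2k-2,j}$. Part (a) now follows from (c): since $\Hom^{\cdot}(\x_{j+2k},Q_{j+2k-2,j})=\C[-1]$, the defining triangle of $L_{\x_{j+2k}}(Q_{j+2k-2,j})$ is a rotation of the triangle associated to~(\ref{sur}), identifying $L_{\x_{j+2k}}(Q_{j+2k-2,j})=Q_{j+2k,j}[-1]$; combining with the inductive hypothesis for (a) at level $k-1$ completes the step.

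For (d), Proposition~\ref{Ext}(d) directly yields $\Ext^i(\x_{j+2a},\x_{j+2b})=0$ for all $0\le a<b\le k$: the restriction $k<n-2$ forces $0<b-a<n-2$, again excluding the critical residues modulo $4n-3$. Together with the exceptionality of each $\x_{j+2l}$, this shows $(\x_{j+2k},\ldots,\x_j)$ is exceptional. Then $Q_{j+2k,j}[-k]$, obtained as an iterated left mutation of $\x_j$ across this exceptional subcollection, is itself exceptional, and hence so is $Q_{j+2k,j}$. The principal obstacle is the interlocking of (b) and (c): the connecting map needed in (b) is exactly the Yoneda composition whose nontriviality is asserted in (c), so the two statements must be proved in lockstep at each inductive level rather than separately.
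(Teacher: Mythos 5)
Your proposal is correct and follows essentially the same route as the paper: induction on $k$, using the exact sequence~(\ref{sur}) together with the filtration~(\ref{filtr}) and Proposition~\ref{Ext}, with the key point being exactly the one you identify --- that the nonvanishing of the Yoneda composition in part (c) at level $k-1$ is what controls the connecting homomorphism needed for part (b) at level $k$, so the parts must be proved in lockstep. The residue computations ruling out the critical shifts $-2$, $-(2n-1)$, $-(2n+1)$ modulo $4n-3$ check out.
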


\begin{proof}
The proof is by induction on $k$. Case $k=0$ is trivial. 
Assume that the lemma is proved for $k-1$.

(a)
By the induction hypothesis we have  
$Q_{j+2k-2,j}[1-k]=L_{\langle\x_{j+2k-2}, \ldots, \x_{j+2}\rangle}(\x_j)$.
We need to check that $L_{\x_{j+2k}}(Q_{j+2k-2,j})=Q_{j+2k,j}[-1]$. 
According to part (c) of the inductive assumption, 
the mutation $L_{\x_{j+2k}}(Q_{j+2k-2,j})$ is given by the triangle
$ Q_{j+2k,j}[-1]\ra \x_{j+2k}[-1]\ra Q_{j+2k-2,j}$,
which is defined by the exact sequence
\begin{equation}
0\ra Q_{j+2k-2,j}\ra Q_{j+2k,j}\ra \x_{j+2k}\ra 0. \label{s1}
\end{equation}

\smallskip
(b) To prove this, we apply the functor $\Hom(\cdot,\x_{j+2l})$ 
to the exact sequence~(\ref{s1}), taking into account
proposition~\ref{Ext} and part (b) of the inductive assumption.
In the case $l=k-1$, we use part (c) of the  inductive assumption
to check that the arising morphism is nonzero.

\smallskip
(c)
Since the object $Q_{j+2k+2,j}$ has a filtration~(\ref{filtr}), 
it follows from 
proposition~\ref{Ext} that $\Ext^i(\x_{j+2k+2},Q_{j+2k-2,j})=0$.
Now it remains to apply 
the functor $\Hom(\x_{j+2k+2},\cdot)$ to the exact sequence~(\ref{s1}).

\smallskip
Part (d) follows from proposition~\ref{Ext} and general properties of
mutations in exceptional collections, see~\cite{Bo}.
Lemma~\ref{aboutQ} is proved. 
\end{proof}

By definitions of the modules $G_{2n}, Q_{2n,4}$, and  $H_{2n+1}$, 
we have the following exact sequences:
\begin{gather}
0\ra G_{2n}\ra A(2n)\ra Q_{2n,4}\ra 0\quad\text{and}  \label{s2}\\
0\to H_{2n+1}\to A(2n+1)\to \x_{2n+1}\to 0. \label{s4}
\end{gather}

It can be easily checked (see lemma~\ref{aboutQ}a) 
that sequences~(\ref{s2}) and~(\ref{s4})
determine mutations 
$H_{2n+1}=L_{A(2n+1)}(\x_{2n+1})$ and
$G_{2n}=L_{A(2n)}(Q_{2n,4})=%
L_{\langle A(2n),\x_{2n},\x_{2n-2}, \ldots, \x_8,\x_6\rangle}(\x_4)[n-2]$. 
\begin{lemma}\label{Gexceptional}
Objects $G_{2n}$ and (for $n>2$) $H_{2n+1}$ are exceptional 
in the category $\qgr(A)$.
\end{lemma}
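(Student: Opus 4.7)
I will establish both exceptionalities from the defining short exact sequences~(\ref{s2}) and~(\ref{s4}) via long exact sequences, invoking Proposition~\ref{Ext} and, in the harder case, Lemma~\ref{compos}.

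For $H_{2n+1}$ I would first verify that $(A(2n+1),\x_{2n+1})$ is an exceptional pair in $\qgr(A)$ when $n>2$: $A(2n+1)$ is exceptional by Proposition~\ref{Ext}(a); $\x_{2n+1}$ is exceptional by Proposition~\ref{Ext}(d), because each of the congruences $0\equiv-2$, $0\equiv-(2n-1)$, $0\equiv-(2n+1)\pmod{4n-3}$ fails for $n>2$; and the last congruence, together with Proposition~\ref{Ext}(c), gives $\Ext^i(\x_{2n+1},A(2n+1))=0$. Since~(\ref{s4}) exhibits $H_{2n+1}$ as the left mutation $L_{A(2n+1)}(\x_{2n+1})$, the standard fact that a left mutation in an exceptional pair is again an exceptional pair~\cite{Bo} concludes this case.

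For $G_{2n}$ this shortcut is unavailable: the collection $(A(2n),\x_{2n},\ldots,\x_6,\x_4)$ is not exceptional, as $\Ext^2(\x_4,\x_{2n})=\C$ by Proposition~\ref{Ext}(d). Instead, I would work directly with~(\ref{s2}). Combining the filtration~(\ref{filtr}) of $Q_{2n,4}$ with Proposition~\ref{Ext} and Lemma~\ref{aboutQ}(b), one computes
\[
\Ext^\bullet(A(2n),Q_{2n,4})=\C,\qquad \Ext^\bullet(Q_{2n,4},A(2n))=\C[-2],\qquad \Ext^\bullet(Q_{2n,4},Q_{2n,4})=\C\oplus\C[-2],
\]
where the first is concentrated in degree $0$ (coming from the top quotient $\x_{2n}$) and the second in degree $2$ (coming from the subobject $\x_4$). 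Applying $\Hom(A(2n),-)$ to~(\ref{s2}) yields $\Ext^\bullet(A(2n),G_{2n})=0$, because the map $\Hom(A(2n),A(2n))\to\Hom(A(2n),Q_{2n,4})$ sends the identity to a nonzero surjection and is hence an isomorphism of one-dimensional spaces. Applying $\Hom(Q_{2n,4},-)$ to~(\ref{s2}) and plugging in the computations, the desired $\Ext^\bullet(Q_{2n,4},G_{2n})=\C[-1]$ reduces to nonvanishing of the Yoneda map
\[
\Ext^2(Q_{2n,4},A(2n))\longrightarrow\Ext^2(Q_{2n,4},Q_{2n,4})
\]
induced by the surjection $A(2n)\twoheadrightarrow Q_{2n,4}$. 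Once that is proved, applying $\Hom(-,G_{2n})$ to~(\ref{s2}) and combining with $\Ext^\bullet(A(2n),G_{2n})=0$ gives $\Ext^i(G_{2n},G_{2n})\cong\Ext^{i+1}(Q_{2n,4},G_{2n})$, whence $\Hom(G_{2n},G_{2n})=\C$ and $\Ext^i(G_{2n},G_{2n})=0$ for $i\ne 0$.

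The main obstacle is this last nonvanishing statement. The plan is to exploit the commutative square obtained by restricting along $\x_4\hookrightarrow Q_{2n,4}$ on the source and projecting along $Q_{2n,4}\twoheadrightarrow\x_{2n}$ on the target. The $\Ext$ computations above show that the restriction $\Ext^2(Q_{2n,4},A(2n))\to\Ext^2(\x_4,A(2n))$ is an isomorphism, and the composition $A(2n)\to Q_{2n,4}\twoheadrightarrow\x_{2n}$ coincides with the natural surjection $A(2n)\twoheadrightarrow\x_{2n}$. Thus nonvanishing reduces to that of the Yoneda pairing
$\Hom(A(2n),\x_{2n})\otimes\Ext^2(\x_4,A(2n))\to\Ext^2(\x_4,\x_{2n})$,
which is exactly Lemma~\ref{compos} with $j=2n$, after the identification $\x_{4n+1}\cong\x_4$ in $\qgr(A)$.
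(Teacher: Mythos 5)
Your proposal is correct and follows essentially the same route as the paper: $H_{2n+1}$ is handled as the left mutation in the exceptional pair $(A(2n+1),\x_{2n+1})$, and $G_{2n}$ is handled by running the three long exact sequences from~(\ref{s2}) and reducing the one nontrivial Yoneda nonvanishing, via restriction to $\x_4$ and projection to $\x_{2n}$, to Lemma~\ref{compos} with $j=2n$. The only cosmetic difference is that you obtain $\Ext^\bullet(Q_{2n,4},Q_{2n,4})$ directly from the filtration~(\ref{filtr}) and Lemma~\ref{aboutQ}(b), whereas the paper derives it in two intermediate steps; the substance is identical.
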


\begin{proof}
Since $H_{2n+1}$ is the  mutation $L_{A(2n+1)}(\x_{2n+1})$
in the exceptional (for $n>2$) collection $(A(2n+1),\x_{2n+1})$,
it is exceptional. Thus, we must show that
$\Hom(G_{2n},G_{2n})=\C$ and $\Ext^i(G_{2n},G_{2n})=0$ for $i\ne 0$. 
We do this in $6$ steps.

1. $\Ext^i(A(2n),G_{2n})=0$.

This follows from $G_{2n}=L_{A(2n)}(Q_{2n,4})$.

2. $\Ext^i(Q_{2n,4},Q_{2n-2,4})=0$.

To prove this, consider the long exact sequence obtained from
the sequence of the type~(\ref{sur})
\begin{equation}
0\ra Q_{2n-2,4}\ra Q_{2n,4}\ra \x_{2n}\ra 0  \label{s3}
\end{equation} 
by applying the functor $\Hom(\cdot,Q_{2n-2,4})$. 
All required $\Ext$ groups are computed in lemma~\ref{aboutQ}c,d.

3. $\Hom(Q_{2n,4},Q_{2n,4})=\C$, $\Ext^2(Q_{2n,4},Q_{2n,4})=\C$,
$\Ext^i(Q_{2n,4},Q_{2n,4})=0$ for $i\ne 0,2$.

This can be proved by applying the functor $\Hom(Q_{2n,4},\cdot)$
to the exact sequence~(\ref{s3}). Take into account lemma~\ref{aboutQ}b
and results of step $2$.

4. $\Ext^2(Q_{2n,4},A(2n))=\C$, $\Ext^i(Q_{2n,4},A(2n))=0$ for $i\ne 2$,
\newline
moreover the map $\Ext^2(Q_{2n,4},A(2n))\otimes \Hom(\x_4,Q_{2n,4})\ra
\Ext^2(\x_4,A(2n))$ is nontrivial.

Note that $Q_{2n,6}\in \langle\x_6, \x_8, \ldots, \x_{2n}\rangle$
and $A(2n)\in\langle\x_6, \x_8, \ldots, \x_{2n}\rangle^{\perp}$.
It follows that  $\Ext^i(Q_{2n,6},A(2n))=0$ for all $i$.
Now apply the functor $\Hom(\cdot,A(2n))$ to the exact sequence
$0\ra \x_4\ra Q_{2n,4}\ra Q_{2n,6}\ra 0$.

5. $\Ext^1(Q_{2n,4},G_{2n})=\C$, $\Ext^i(Q_{2n,4},G_{2n})=0$ for $i\ne 1$.  

Apply the functor $\Hom(Q_{2n,4},\cdot)$ to the exact sequence~(\ref{s2}).
To obtain the result, we must check that the composition
$Q_{2n,4}\ra A(2n)[2]\ra Q_{2n,4}[2]$ is nonzero. 
A fortiori, it is sufficient to check that the following composition 
of nontrivial morphisms
$\x_4\ra Q_{2n,4}\ra A(2n)[2]\ra Q_{2n,4}[2]\ra \x_{2n}[2]$
is nonzero.
The composition $\x_4\ra Q_{2n,4}\ra A(2n)[2]$ is nonzero by step 4 and 
the composition $A(2n)\ra Q_{2n,4}\ra \x_{2n}$ is nonzero by definition.
Finally, the composition $\x_4\ra A(2n)[2]\ra \x_{2n}[2]$ 
is nonzero by lemma~\ref{compos}.
                                 
6. $G_{2n}$ is an exceptional object. 

Apply the functor $\Hom(\cdot,G_{2n})$ to the exact sequence~(\ref{s2}). 
Using results of steps~1~and~5, we get $\Hom(G_{2n},G_{2n})=\C$,
$\Ext^i(G_{2n},G_{2n})=0$ for $i\ne 0$.

Lemma~\ref{Gexceptional} is proved. 
\end{proof}

\begin{proof}[Proof of theorem~\ref{EC1}]

It follows from proposition~\ref{Ext} that the collection
$(A, A(1),\ldots,A(2n-1),A(2n),\x_{2n},x_{2n-1},\ldots,\x_6,\x_5)$
is exceptional.
By lemma~\ref{Gexceptional}, objects $G_{2n}$ and $H_{2n+1}$ are exceptional.
It remains to check that the following groups vanish:

1. $\Ext^i(G_{2n},A(k))=0$ for $k=0,\ldots, 2n-1$.

This is obvious. Actually, by construction
$G_{2n}$ is  an object of the subcategory
\begin{equation}
\langle A(2n),\x_{2n},\x_{2n-2},\ldots, \x_6, \x_4\rangle;
\label{subcat}
\end{equation}
on the other hand, the modules $A,A(1),\ldots,A(2n-1)$  are right orthogonal
to this subcategory.

2. $\Ext^i(\x_j,G_{2n})=0$ for $j=5,6,\ldots,2n-1,2n$.

For even $j$ this follows from
$G_{2n}=L\langle A(2n),\x_{2n},\x_{2n-2}, \ldots,\x_6\rangle(\x_4)[n-2]$
and general properties of mutations, see~\cite{Bo}.
For odd $j$ we note that by proposition~\ref{Ext}
$\x_j$ is left orthogonal to the subcategory~(\ref{subcat}) and $G_{2n}$
belongs to this subcategory.

3. $\Ext^i(A(2n),G_{2n})=0$.

This is proved in step 1 of lemma~\ref{Gexceptional}.

4. $\Ext^i(H_{2n+1},A(k))=0$ for $k=1,\ldots, 2n$,
 $\Ext^i(\x_{j},H_{2n+1})=0$ for $j=6,\ldots, 2n$.

This follows from the exact sequence~(\ref{s4}) and proposition~\ref{Ext}.

5. $\Ext^i(H_{2n+1},A)=0$ and $\Ext^i(\x_5,H_{2n+1})=0$.

Apply the functors $\Hom(\cdot,A)$ and $\Hom(\x_5,\cdot)$
to the exact sequence~(\ref{s4}). To obtain the result, we should show that
the compositions of nontrivial maps 
$A(2n+1)\to\x_{2n+1}\to A[2]$ and $\x_5\to A(2n+1)[2]\to\x_5[2]$
are nontrivial. This is proved in lemma~\ref{compos}.

6. $\Ext^i(H_{2n+1},G_{2n})=0$.

Indeed, we have
$G_{2n}\in\langle A(2n),\x_{2n},\ldots, \x_6, \x_4\rangle\subset%
\langle A(2n+1),\x_{2n+1}\rangle^{\perp}\ni H_{2n+1}$.

This finishes the proof of theorem~\ref{EC1}.
\end{proof}

\begin{proof}[\bf Remark]
 It is not hard to check that for all $n\ge 2$ the collection
$$(A, A(1),A(2), \ldots, A(2n-2), G_{2n-1}, A(2n-1),
G_{2n}, A(2n),  \x_{2n}, \x_{2n-1},\ldots, \x_6, \x_5)$$
is exceptional. 
\end{proof}

\begin{theorem}\label{full}
The exceptional collections
\begin{gather}
(A, A(1), A(2),\ldots, A(2n-1), G_{2n}, A(2n), H_{2n+1}, \x_{2n}, \x_{2n-1},
\ldots,\x_6, \x_5) \quad\text{and} \label{ec_2}\\
(A, A(1),A(2), \ldots, A(2n-2), G_{2n-1}, A(2n-1),
G_{2n}, A(2n),  \x_{2n}, \x_{2n-1},\ldots, \x_6, \x_5)
\label{ec_1}
\end{gather}
in the category $\D^b(\qgr(A))$ are full.
\end{theorem}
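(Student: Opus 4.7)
To prove the exceptional collection is full, I show that the triangulated subcategory $\CC \subset \D^b(\qgr(A))$ it generates coincides with the whole derived category. Since the family $\{A(k):k\in\Z\}$ of twists forms a set of generators of $\D^b(\qgr(A))$---every finitely generated graded $A$-module admits a free resolution by such twists---it suffices to verify $A(k)\in\CC$ for all $k\in\Z$. My strategy is first to obtain $\x_j\in\CC$ for every residue $j$ modulo $4n-3$, and then to extend the available twists $A(k)$ to all integers by induction on $|k|$ using Lemma~\ref{kozhul}.

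By unfolding the defining mutations $G_{2n}=L_{\langle A(2n),\x_{2n},\ldots,\x_6\rangle}(\x_4)[n-2]$ and $G_{2n-1}=L_{\langle A(2n-1),\x_{2n-1},\ldots,\x_5\rangle}(\x_3)[n-2]$, the subcategory $\CC$ of collection~(\ref{ec_1}) contains $\x_3,\x_4$; for collection~(\ref{ec_2}), which lacks $G_{2n-1}$, only $\x_4$ is recovered immediately. Next, I propagate: the exact sequence~(\ref{xqx}) $0\to\x_{k-2n+1}\to(A/(x_0,x_1))(k)\to\x_k\to 0$ combined with the Koszul resolution of $(A/(x_0,x_1))(k)$ from Lemma~\ref{kozhul} shows that whenever $A(k-3),\ldots,A(k)\in\CC$, membership of one endpoint in $\CC$ forces the other. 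Applying this for $k=3,\ldots,2n$ extends the set of residues in $\CC$, leaving for collection~(\ref{ec_1}) only $j\equiv 2 \pmod{4n-3}$ unfilled, and for collection~(\ref{ec_2}) only the three residues $\{2,3,2n+1\}$.

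To break the deadlock in collection~(\ref{ec_1}) I invoke the \emph{second} Koszul sequence of Lemma~\ref{kozhul}, twisted to give $0\to A(1)\to A(2n)\oplus A(2)\to A(2n+1)\to Q_{2n+1,-2n+5}\to 0$. The filtration~(\ref{filtr}) of $Q_{2n+1,-2n+5}$ has as successive quotients precisely the $\x_j$'s already established, so $Q_{2n+1,-2n+5}\in\CC$; the resolution then yields $A(2n+1)\in\CC$, and a second application of the propagation at $k=2n+1$ produces $\x_2\in\CC$. For collection~(\ref{ec_2}) the parallel unblocking uses $H_{2n+1}$: the exact sequence $0\to(x_0,x_2)(2n+1)\to H_{2n+1}\to Q_{2n-1,-2n+5}\to 0$, arising from the inclusion of ideals $(x_0,x_2)\subset(x_0,x_1,x_2)$, combined with the Koszul resolution of $(x_0,x_2)(2n+1)$, places $Q_{2n-1,-2n+5}\in\CC$; its filtration has every subquotient in $\CC$ except $\x_3$, so iteratively peeling off the known quotients from top and bottom isolates $\x_3\in\CC$. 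From $\x_3$, propagation supplies $\x_{2n+1}$, then $A(2n+1)$ via the triangle~(\ref{s4}), and finally $\x_2$.

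With $\x_j\in\CC$ for every residue and $A(0),\ldots,A(2n+1)\in\CC$, the remaining twists $A(k)$ are obtained by induction on $|k|$ from the Koszul sequences of Lemma~\ref{kozhul} read in both directions, completing the proof that $\CC=\D^b(\qgr(A))$. The main obstacle is to orchestrate the two propagations so that each new object is lifted only when its prerequisites are already in $\CC$; the second Koszul sequence, and for collection~(\ref{ec_2}) the two-step filtration of $H_{2n+1}$, supply precisely the needed leverage to close the loop without circularity.
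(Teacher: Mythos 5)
Your reduction ``it suffices to verify $A(k)\in\CC$ for all $k\in\Z$'' is the right one, but the justification you give for it --- that every finitely generated graded $A$-module admits a free resolution by twists of $A$ --- does not prove it. $A$ is a hypersurface ring with a singular point, so modules supported there (for instance $\x$ itself) have \emph{infinite} projective dimension, and an infinite resolution does not place an object in the triangulated subcategory generated by the $A(k)$: that subcategory is closed only under finitely many cones. The paper closes this step by a different argument: the subcategory $\DD_0$ generated by an exceptional collection is admissible, so it suffices to show $\DD_0^{\perp}=0$; if $Y\in\DD_0^{\perp}$ were nonzero, one chooses $j_0$ with $H^{j_0}(Y)\ne 0$ and uses the Artin--Zhang vanishing ($\Ext^i_{\qgr(A)}(A(-k),M)=0$ for $i>0$ and $\Hom_{\qgr(A)}(A(-k),M)=M_k$ for $k\gg 0$) together with the spectral sequence $E_2^{pq}=\Ext^q(A(-k),H^p(Y))$ to produce a nonzero $\Hom^{j_0}(A(-k),Y)$, a contradiction. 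Some such argument is indispensable; as written, the final step of your proof fails.

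The generation bookkeeping itself is essentially the paper's: the mutations defining $G_{2n}$ (and $G_{2n-1}$) put $\x_4$ (and $\x_3$) into $\CC$, the first Koszul sequence of Lemma~\ref{kozhul} combined with~(\ref{xqx}) propagates residues, and the second Koszul sequence breaks the deadlock at residue $2$; your twist by $2n+1$ in place of the paper's twist by $2n$ is an immaterial variant. Where you genuinely diverge is collection~(\ref{ec_2}): the paper deduces its fullness from that of~(\ref{ec_1}) by showing that $L_{\langle A(2n-1),G_{2n},A(2n)\rangle}(H_{2n+1})$ and $G_{2n-1}$ generate the same rank-one orthogonal complement and hence agree up to shift, whereas you extract $\x_3$ directly from $H_{2n+1}$ via the sequence $0\to(x_0,x_2)(2n+1)\to H_{2n+1}\to Q_{2n-1,-2n+5}\to 0$ and the Koszul resolution of $(x_0,x_2)(2n+1)$. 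This route does work --- the needed identity $(x_0,x_2):x_1=(x_0,x_1^{2n-2},x_2)$ in $A$ holds because $x_1^{2n-1}=-x_0x_3-x_2^2$ and $(x_0,x_2)$ lifts to a regular sequence in $B$ --- but that colon-ideal computation is the crux of your sequence and must be carried out explicitly. Your approach is more self-contained (it does not lean on the fullness of the other collection), at the price of this extra verification. Once the orthogonality argument of the previous paragraph is supplied, your proof is correct.
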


\begin{proof}
First we prove that the collection~(\ref{ec_1}) is full.

Let $\DD_0$ be the subcategory of $\D^b(\qgr(A))$ generated
by objects of the collection~(\ref{ec_1}).
We claim that the modules $A(k)$ for all $k\in \Z$ belong to $\DD_0$.

As it was pointed before lemma~\ref{Gexceptional}, 
$G_{2n}=L_{\langle A(2n),\x_{2n},\x_{2n-2} 
\ldots \x_8,\x_6\rangle}(\x_4)[n-2]$, hence $\x_4\in \DD_0$. 
Similarly, $G_{2n-1}=L_{\langle A(2n-1),\x_{2n-1},\x_{2n-3} 
\ldots \x_7,\x_5\rangle}(\x_3)[n-2]$ and $\x_3\in\DD_0$. 

Consider the first exact sequence from lemma~\ref{kozhul}.
Twisting it by $k$, we obtain
\begin{equation}
0\ra A(k-3) \ra A(k-1)\oplus A(k-2) 
\ra A(k)\ra \left(A/{(x_0,x_1)}\right)(k)\ra 0.
\label{s5}
\end{equation}
For $k=3,\ldots,2n$ the terms
$A(k-3),A(k-1)\oplus A(k-2)$, and $A(k)$ of the complex~(\ref{s5}) 
belong to $\DD_0$, therefore we get $\left(A/{(x_0,x_1)}\right)(k)\in\DD_0$. 
Further, consider the exact sequence
$0\ra \x_{k-2n+1}\ra \left(A/{(x_0,x_1)}\right)(k)\ra \x_k\ra 0$
of type~(\ref{xqx}). 
For $k=3,\ldots,2n$ its terms
$\x_{k-2n+1}$ and $\left(A/{(x_0,x_1)}\right)(k)$ belong to $\DD_0$, 
it follows that $\x_{k-2n+1}\cong\x_{k+2n-2}\in\DD_0$.
Thus the objects
$\x_{2n+1}, \x_{2n+2},\ldots, \x_{4n-4},\x_{4n-3}=\x_0,\x_1$
belong to the subcategory~$\DD_0$. Let's prove that $\DD_0$ contains $\x_2$.

Consider the second exact sequence from lemma~\ref{kozhul}.
Twisting it by $2n$, we get
$0\ra A \ra A(2n-1)\oplus A(1)\ra A(2n)\ra Q_{2n,-2n+4}\ra 0$.
Since three left terms in this sequence belong to the subcategory $\DD_0$,
the right term $Q_{2n,-2n+4}$ also belongs to $\DD_0$.
We know that $Q_{2n,-2n+4}$ has a filtration~(\ref{filtr}) with quotients
isomorphic to the modules
$\x_{-2n+4}, \x_{-2n+6},\ldots, \x_{-2}, \x_0, \x_2,\ldots,\x_{2n}$.
All the above modules, except $\x_2$, belong to~$\DD_0$. 
Therefore $\x_2\in\DD_0$.

We see that the subcategory $\DD_0$ contains the objects $\x_j$ for all $j$.
It follows that $\DD_0$ contains the objects $\left(A/{(x_0,x_1)}\right)(k)$ for all $k$.                       
Now consider the complex~(\ref{s5}). We proved that for any integer $k$ 
the right term of~(\ref{s5}) belongs to $\DD_0$.
Taking $k=2n+1, 2n+2, \ldots $ and arguing as above we deduce that  
objects $A(2n{+}1), A(2n{+}2), \ldots$ belong to the subcategory $\DD_0$.
Similarly, putting $k=2, 1, 0, -1, \ldots $ we show that $\DD_0$
contains $A(-1), A(-2),\ldots$

We want to prove that the subcategory $\DD_0$ is equivalent to the 
whole category $\D^b(\qgr(A))$. Since a category generated by an  
exceptional collection is admissible (see~\cite{Bo}), it suffices to show
that the right orthogonal
$\DD_0^{\perp}=\{\,Y\in\D^b(\qgr(A))\mid\Hom(\DD_0,Y)=0\,\}$
to~$\DD_0$ is zero.

Assume the converse. Let $Y$ be an object in $\DD_0^{\perp}$ such that $Y\ne 0$.
Choose an integer $j_0$ such that $H^{j_0}(Y)\ne~0$.  
From~\cite[prop. 7.4(1), 3.11(3)]{AZ} it follows that for
any finitely generated graded $A$-module $M$ for $k\gg 0$ we have
$\Hom_{\qgr(A)}(A(-k),M)=M_k$ and $\Ext^i_{\qgr(A)}(A(-k),M)=0$ if $i>0$.
Choose an integer $k$ such that 
$\Ext^i(A(-k),H^j(Y))=0$ if $i>0, j\in\Z$ and 
$\Hom(A(-k),H^{j_0}(Y))\ne~0$. 
Use the spectral sequence with $E_2^{pq}=\Ext^q(A(-k),H^p(Y))$
to compute $\Hom^j(A(-k),Y)$.
By the choice of $k$,  $E_2^{pq}=0$ for $q>0$.
Thus  the spectral sequence degenerates in the second term,
$E_2^{pq}=E_{\infty}^{pq}$, and we get
$\Hom^{j_0}(A(-k),Y)=E_2^{j_00}=\Hom_{\qgr(A)}(A(-k),H^{j_0}(Y))\ne~0$.
This contradiction concludes the proof.

Now we prove that the collection~(\ref{ec_2}) is full. 
Let us show that the mutation
$G'=L_{\langle A(2n-1),G_{2n},A(2n)\rangle}(H_{2n+1})$ 
is isomorphic to $G_{2n-1}[k]$ for some $k$; this implies
that the collection~(\ref{ec_2}) is full.
Indeed, $G'$ belongs to the intersection of two orthogonals
$\SS={}^{\perp}\langle A,\ldots,A(2n-2)\rangle\cap%
\langle A(2n-1),G_{2n},\ldots,\x_5\rangle^{\perp}$.
Since the collection~(\ref{ec_1}) is full, the subcategory $\SS$
is generated by the exceptional object $G_{2n-1}$ and therefore it
is equivalent to the derived category of vector spaces $\D^b(\C\rm{-vect})$. 
Any exceptional object in $\D^b(\C\rm{-vect})$ is of the form $\C[k]$;
it follows that $G'\cong G_{2n-1}[k]$ for some $k$.

Theorem~\ref{full} is completely proved. 
\end{proof}

\section{Computation of the morphism algebra, associated with exceptional 
collection.}
\label{alg}

Starting with the collection
$(\x_{2n},\x_{2n-2},\ldots,\x_8,\x_6)$ we can obtain the collection
\linebreak
$(\x_6,R_{\x_6}(\x_8),R_{\langle\x_8,\x_6\rangle}(\x_{10}),\ldots,%
R_{\langle\x_{2n-2},\ldots,\x_6\rangle}(\x_{2n}))$ by a series of mutations.
The latter collection is equivalent  up to shifts of objects to the collection
$(Q_{6,6},Q_{8,6},\ldots,Q_{2n-2,6},Q_{2n,6})$,
which has a nice property --- higher $\Ext$ groups between 
its objects are zero.

Mutating as above the collection~(\ref{ec_2}), 
we obtain the collection
\begin{equation}
(A, A(1), \ldots, A(2n-1), 
G_{2n}, A(2n), H_{2n+1},
Q_{6,6},Q_{8,6}, \ldots, Q_{2n,6},
Q_{5,5}, Q_{7,5}, \ldots, Q_{2n-1,5}).
\label{ec_3}
\end{equation}
The number of nonzero higher $\Ext$ groups for this collection is much less
than for the collections~(\ref{ec_1})~and~(\ref{ec_2}).
In this section we explicitly describe the morphism algebra
for the collection~(\ref{ec_3}). That is, we compute
groups $\Hom$ and $\Ext$ between objects and describe
the composition law.

Let
$\wdt{A_k}$ be the subspace of $A_k$ defined as follows.
If $m\le n-2$, then $\wdt{A_{2m}}$ is the subspace of $A_{2m}$ 
generated by all 
monomials except $x_1^m$; in other cases $\wdt{A_k}$ is the whole
space $A_k$.
By definition, $\wdt{A_k}$ is the homogeneous component $(G_0)_k$  of degree $k$
of the ideal $G_0=\ker(A\ra Q_{0,4-2n})$.

\begin{predl}\label{extsincoll}
All nontrivial $\Ext$ groups between objects of the
collection~(\ref{ec_3}) are listed below.
\begin{enumerate}
\item[(a)] $\Hom(A(k),A(l))=A_{l-k}$, polinomials from $A_{l-k}$ act
on $A(k)$ by multiplication;

\item[(b)] $\Hom(A(k),Q_{j+2r,j})=\C=\langle x_1^{r+(j-k)/2}\rangle$ if
$j\le k\le j+2r, 
k \equiv j \pmod{2}$, the generator sends $1$ into $[x_1^{r+(j-k)/2}]$
(further on,  $[\quad ]$ denotes a coset in a quotient module);

\item[(c)] $\Hom(Q_{j+2r,j},Q_{j+2s,j})=\C{=}\langle x_1^{s-r}\rangle$ if
$j=5,6$, $r{\le}s$,
the generator sends $[1]$ into $[x_1^{s-r}]$;

\item[(d)] $\Hom(A(k),G_{2n})=\wdt{A_{2n-k}}$,\\
$\Hom(G_{2n},A(2n))=\C$, the generator is the embedding of the ideal, \\
$\Ext^1(G_{2n},A(2n))=\C$,
$\Ext^1(G_{2n},Q_{2n,6})=\C$;

\item[(e)] $\Hom(A(k),H_{2n+1})=A_{2n+1-k}$ if $0 \le k \le 2n$,
polinomials from $A_{2n+1-k}$ act by multiplication,\\
$\Hom(G_{2n},H_{2n+1})=\C=\langle x_0\rangle$, the generator
acts by multiplication by $x_0$,\\
$\Hom(H_{2n+1},Q_{2n-1,5})=\C$, the generator is given by 
$f(x)\mapsto \frac1{x_1}[f(x)]$.
\end{enumerate}
\end{predl}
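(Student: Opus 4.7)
The plan is to reduce every claim to Proposition~\ref{Ext} by applying suitable $\Hom$ functors to the standard exact sequences of Section~2: the filtration~(\ref{filtr}) of $Q_{j+2r,j}$ with successive quotients $\x_j, \x_{j+2}, \dots, \x_{j+2r}$, together with the defining sequences~(\ref{s2}) for $G_{2n}$ and~(\ref{s4}) for $H_{2n+1}$. Since collection~(\ref{ec_3}) is obtained from the exceptional collection~(\ref{ec_2}) by mutations, it is itself exceptional, so $\Ext^i(E_k,E_l){=}0$ for $k{>}l$ automatically and only the forward groups need to be computed.

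Part~(a) is immediate from Proposition~\ref{Ext}(a) restricted to $0\le k\le l\le 2n$, where $l-k<2n+1$ kills $\Ext^2$. For (b) and~(c), applying $\Hom(A(k),\cdot)$ or $\Hom(Q_{j+2r,j},\cdot)$ to the filtration~(\ref{filtr}) of the target reduces the dimension count to Proposition~\ref{Ext}(b),(d); within the specified index ranges the congruence condition mod $4n-3$ picks out at most one simple subquotient $\x_{j+2s}$ matching the source, yielding a one-dimensional $\Hom$ and vanishing higher $\Ext$. The explicit generator $[x_1^{r+(j-k)/2}]$ is identified by verifying that it lies in the correct graded component of $A/(x_0, x_1^{r+1}, x_2)$ and is nonzero there. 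Parts (d) and (e) are handled analogously by applying $\Hom(A(k),\cdot)$ and $\Hom(\cdot,A(2n))$, $\Hom(\cdot,Q_{2n,6})$ to~(\ref{s2}), and their counterparts to~(\ref{s4}). For instance, $\Hom(A(k),G_{2n})$ appears as the kernel of the evaluation map $A_{2n-k}=\Hom(A(k),A(2n))\to\Hom(A(k),Q_{2n,4})=\C\langle x_1^{n-k/2}\rangle$, and the kernel is exactly $\wdt{A_{2n-k}}$ by the very definition of the latter as $(G_0)_{2n-k}$. The generator of $\Hom(H_{2n+1},Q_{2n-1,5})$, given by $f\mapsto\tfrac1{x_1}[f]$, is well defined because $H_{2n+1}$ is spanned by multiples of $x_0,x_1,x_2$, each of which becomes divisible by $x_1$ upon passing to $A/(x_0,x_1^{2n-1},x_2)$ at the appropriate shifted degree.

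The main obstacle will be the combinatorial bookkeeping required to certify that \emph{all} unlisted $\Ext$ groups vanish and that each listed generator is genuinely nonzero. The latter requires Yoneda composition arguments in the style of Lemma~\ref{compos}, especially for the $\Ext^1$ classes $\Ext^1(G_{2n},A(2n))$ and $\Ext^1(G_{2n},Q_{2n,6})$, which arise as connecting homomorphisms from~(\ref{s2}) and must be traced through the long exact sequence to prove nonvanishing. Likewise the mixed groups $\Hom(G_{2n},H_{2n+1})$ and $\Hom(H_{2n+1},Q_{2n-1,5})$ require interleaving the two defining sequences~(\ref{s2}) and~(\ref{s4}) with the filtration~(\ref{filtr}), which is where the calculation becomes delicate; identifying $x_0$ as the generator of $\Hom(G_{2n},H_{2n+1})$ amounts to observing that $x_0$ maps the ideal $(x_0,x_1^{n-1},x_2)$ into $(x_0,x_1,x_2)$ with the correct degree shift and is the only such map up to scalars.
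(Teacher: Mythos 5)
Your plan follows the paper's proof essentially verbatim: part (a) is Proposition~\ref{Ext}(a), parts (b)--(c) are read off from the filtration~(\ref{filtr}) and the sequences of type~(\ref{qqq}), and parts (d)--(e) come from applying $\Hom$ functors to the defining sequences~(\ref{s2}) and~(\ref{s4}), with the nonvanishing of generators traced back to Lemma~\ref{compos} and step~4 of Lemma~\ref{Gexceptional}, exactly as the paper does. The only point where the paper is slightly slicker is part (c): instead of filtering the source $Q_{j+2r,j}$ (which forces the connecting-map bookkeeping you flag as the main obstacle), it applies $\Hom(Q_{j+2r,j},\cdot)$ to the sequence $0\to Q_{j+2r,j}\to Q_{j+2s,j}\to Q_{j+2s,j+2r+2}\to 0$ and uses the vanishing $\Ext^i(Q_{j+2r,j},Q_{j+2s,j+2r+2})=0$ together with the exceptionality of $Q_{j+2r,j}$ from Lemma~\ref{aboutQ}(d).
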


\begin{proof}
(a) See proposition~\ref{Ext}.

(b) To check this, note that $Q_{j+2r,j}$ has
the filtration~(\ref{filtr}). 

(c) To compute $\Ext^i(Q_{j+2r,j},Q_{j+2s,j})$, 
apply the functor $\Hom(Q_{j+2r,j},\cdot)$ to the exact sequence
$0{\ra}Q_{j+2r,j}{\ra}Q_{j+2s,j}{\ra}Q_{j+2s,j+2r+2}{\ra}0$.
Notice that $\Ext^i(Q_{j+2r,j},Q_{j+2s,j+2r+2})=0$ to get the result.
Use the filtrations~(\ref{filtr}) to check the 
equalities $\Ext^i(Q_{6+2r,6},Q_{5+2s,5})=0$.

(d) To calculate $\Ext^i(A(k),G_{2n})$ apply the functor $\Hom(A(k),\cdot)$
to the exact sequence~(\ref{s2}).
The groups $\Ext^i(G_{2n},A(2n))$ can be computed
by applying the functor $\Hom(\cdot,A(2n))$ to the sequence~(\ref{s2});
the required groups $\Ext(Q_{2n,4},A(2n))$ were computed in step 4 
of lemma~\ref{Gexceptional}. 
It is not hard to check that
$\Ext^1(G_{2n},\x_{2n})=\C$, other groups
$\Ext(G_{2n},\x_j)$, where $5\le j\le 2n$, are zero. 
Now, passing from~$\x$ to~$Q$ we can сompute the groups 
$\Ext^i(G_{2n},Q_{j+2r,j})$. 

(e) The groups $\Ext^i(A(k),H_{2n+1})$ can be easily computed by the definition
of $H_{2n+1}$.
Note that there exists a unique morphism  $x_0\colon A(2n)\to A(2n+1)$ from
objects $A(2n),\x_{2n},\ldots,\x_6,\x_4$ to objects $A(2n+1),\x_{2n+1}$,
this yields the statements concerning $\Ext(G_{2n},H_{2n+1})$. 
To calculate $\Ext^i(H_{2n+1},Q)$ apply the functor 
$\Hom(\cdot,Q_{j+2r,j})$ to the sequence~(\ref{s4}). 
It remains to check that the formula
$f(x)\mapsto \frac1{x_1}[f(x)]$ gives a well-defined nonzero morphism
$H_{2n+1}\ra Q_{2n-1,5}$. 
\end{proof}

In the next proposition we give an explicit description of composition maps
$\Ext^i(E_l,E_m)\otimes \Ext^j(E_k,E_l)\ra \Ext^{i+j}(E_k,E_m)$
for objects $E_i$ of the collection~(\ref{ec_3}).
We omit trivial cases where one of the groups 
$\Ext^i(E_l,E_m)$, $\Ext^j(E_k,E_l)$ or $\Ext^{i+j}(E_k,E_m)$ is zero.

\begin{predl}
The composition law in the morphism algebra for the collection~(\ref{ec_3})
is the following: 
\begin{itemize}
\item[(a)] $A(k)\ra A(l)\ra A(m)$.  The composition $A_{m-l}\otimes A_{l-k}\ra
A_{m-k}$ is the multiplication of polinomials.
\item[(b)] $A(j+2k)\ra A(j+2l)\ra Q_{j+2r,j}$.  The composition 
$\C\langle x_1^{r-l}\rangle\otimes A_{2l-2k}\ra \C\langle x_1^{r-k}\rangle$ 
is given by $x_1^{r-l}\otimes f\mapsto c_{l-k}x_1^{r-l}$,
where $c_{l-k}$ is the coefficient of $x_1^{l-k}$ in $f\in A_{2l-2k}$.
\item[(c)] $A(j{+}2k)\ra Q_{j+2r,j}\ra Q_{j+2s,j}$.  The composition
$\C\langle x_1^{s-r}\rangle\otimes \C\langle x_1^{r-k}\rangle\ra%
\C\langle x_1^{s-k}\rangle$ 
 is the multiplication.
\item[(d)] $Q_{j+2q,j}\ra Q_{j+2r,j}\ra Q_{j+2s,j}$.  The composition
$\C\langle x_1^{s-r}\rangle\otimes \C\langle x_1^{r-q}\rangle\ra \C\langle x_1^{s-q}\rangle$ 
is the multiplication.
\item[(e)] $A(k)\ra A(l)\ra G_{2n}$.  The composition
$\wdt{A_{2n-l}}\otimes A_{l-k}\ra \wdt{A_{2n-k}}$ is a restriction of
the multiplication $A_{2n-l}\otimes A_{l-k}\ra A_{2n-k}$. 
\item[(f)] $A(k)\ra G_{2n}\ra A(2n)$.  The composition
$\C\otimes \wdt{A_{2n-k}}\ra A_{2n-k}$ is the  multiplication.
\item[(g)] $G_{2n}\ra A(2n)[1]\ra Q_{2n,6}[1]$. \\
 The composition
$\Hom(A(2n),Q_{2n,6})\otimes \Ext^1(G_{2n},A(2n))\ra \Ext^1(G_{2n},Q_{2n,6})$
is nonzero.
\item[(h)] $A(k)\ra G_{2n}\ra H_{2n+1}$.  The composition
$\C\langle x_0\rangle\otimes \wdt{A_{2n-k}}\ra A_{2n+1-k}$ 
is the multiplication.
\item[(i)] $G_{2n}\ra A(2n)\ra H_{2n+1}$.  The composition
$\C\langle x_0\rangle\otimes \C\ra \C\langle x_0\rangle$ is the multiplication.
\item[(j)] $A(2k+1)\ra H_{2n+1}\ra Q_{2n-1,5}$ ($k\ge 2$).  The composition 
$\C\otimes A_{2n-2k}\ra \C\langle x_1^{n-k-1}\rangle$  is given by
$1\otimes f\mapsto c_{n-k} x_1^{n-k-1}$, where $c_{n-k}$ is the coefficient
of $x_1^{n-k}$ in $f$.
\end{itemize}
\end{predl}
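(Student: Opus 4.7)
The plan is to handle the ten claims independently. Nine of them (parts (a)--(f) and (h)--(j)) involve only $\Hom$ groups and reduce to direct computation with the explicit generators already produced in Proposition~\ref{extsincoll}, while case (g) is the single composition involving $\Ext^1$ and requires a Yoneda-product argument.

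For the Hom-only cases I would fix the explicit generator of each Hom space and multiply them on the nose. Each such Hom is generated either by multiplication by a polynomial (when the target is $A(\ell)$ or an ideal inside $A$), by a coset-assignment $1 \mapsto [x_1^p]$ (when the target is a quotient $Q_{j+2r,j}$), or by the quotient formula $f \mapsto \tfrac1{x_1}[f]$ used in case (j). For instance in (b), precomposing $1 \mapsto [x_1^{r-l}]$ with multiplication by $f \in A_{2l-2k}$ yields $1 \mapsto [f \cdot x_1^{r-l}]$, and reducing modulo $(x_0, x_1^{r+1}, x_2)$ picks out exactly the $x_1^{l-k}$-coefficient of $f$, as claimed. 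Cases (c), (d), (j) are minor variants of the same mechanism. For the cases involving $G_{2n}$ (parts (e), (f), (h), (i)), the key observation is that the embedding $G_{2n} \hookrightarrow A(2n)$ is $A$-linear, so ordinary multiplication in $A$ governs the composition and one only needs to check that the products land in the appropriate subspace $\wdt{A_k}$.

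The nontrivial case is (g). I would first identify the generator of $\Ext^1(G_{2n},A(2n))$ by applying $\Hom(-, A(2n))$ to the defining sequence~(\ref{s2}); combining Proposition~\ref{Ext} with the computations of Lemma~\ref{Gexceptional} shows that all outer terms vanish and yields a natural isomorphism $\Ext^1(G_{2n}, A(2n)) \xrightarrow{\sim} \Ext^2(Q_{2n,4}, A(2n))$ realized by Yoneda product with $[(\ref{s2})] \in \Ext^1(Q_{2n,4}, G_{2n})$. The same argument with $Q_{2n,6}$ in place of $A(2n)$ (using the vanishing of $\Ext^2(Q_{2n,6}, A(2n))$ obtained by running through the filtration~(\ref{filtr}) and Proposition~\ref{Ext}(c)) gives $\Ext^1(G_{2n}, Q_{2n,6}) \cong \Ext^2(Q_{2n,4}, Q_{2n,6})$. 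Naturality of Yoneda composition then reduces the claim to showing that $\phi_* \colon \Ext^2(Q_{2n,4}, A(2n)) \to \Ext^2(Q_{2n,4}, Q_{2n,6})$, induced by the quotient $\phi \colon A(2n) \twoheadrightarrow Q_{2n,6}$, is nonzero. Restricting along the inclusion $\x_4 \hookrightarrow Q_{2n,4}$ from the filtration~(\ref{filtr})—which identifies the source with $\Ext^2(\x_4, A(2n))$ via Proposition~\ref{Ext}—and projecting along $Q_{2n,6} \twoheadrightarrow \x_{2n}$ further reduces matters to showing that $\Ext^2(\x_4, A(2n)) \to \Ext^2(\x_4, \x_{2n})$ is nonzero, which is exactly Lemma~\ref{compos} applied with $j = 2n$ (using the isomorphism $\x_{4n+1} \cong \x_4$ in $\qgr(A)$).

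The main obstacle is case (g); all the other parts are careful but routine bookkeeping of the explicit generators from Proposition~\ref{extsincoll}, while (g) requires the Yoneda-reduction above together with an appeal to Lemma~\ref{compos} to finish.
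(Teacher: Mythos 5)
Your proposal is correct and follows essentially the same route as the paper: the Hom-only parts are direct computations with the explicit generators of Proposition~\ref{extsincoll}, and part (g) is reduced, by pre-composing with $\x_4[-1]\ra Q_{2n,4}[-1]\ra G_{2n}$ and post-composing with $Q_{2n,6}[1]\ra\x_{2n}[1]$, to the nonvanishing of $\x_4\ra A(2n)[2]\ra\x_{2n}[2]$, which is exactly Lemma~\ref{compos}. The only quibble is that the vanishing needed to identify $\Ext^1(G_{2n},Q_{2n,6})$ with $\Ext^2(Q_{2n,4},Q_{2n,6})$ is $\Ext^{1}(A(2n),Q_{2n,6})=\Ext^{2}(A(2n),Q_{2n,6})=0$ (Proposition~\ref{Ext}(b) plus the filtration~(\ref{filtr})), not $\Ext^2(Q_{2n,6},A(2n))=0$ as you cite; this does not affect the argument, since only the one-sided implication through the commuting square is actually used.
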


\begin{proof}
These statements, except for probably part (g), 
easily follow from the description of
$\Hom$ groups given in  proposition~\ref{extsincoll}.
Let us check that the composition of nonzero 
morphisms 
$G_{2n}\ra A(2n)[1]\ra Q_{2n,6}[1]$ is nonzero.
In fact, we show that  the composition
$\x_4[-1]\ra Q_{2n,4}[-1]\ra G_{2n}\ra A(2n)[1]\ra Q_{2n,6}[1]\ra \x_{2n}[1]$
is nontrivial, where the morphism $Q_{2n,4}[-1]\ra G_{2n}$ is given by 
the extension~(\ref{s2})
and the left and right morphisms are definied in subsection~\ref{defA}.
Indeed, by the proof of proposition~\ref{extsincoll}d this composition
is a composition of nontrivial morphisms
$\x_4[-1]\ra A(2n)[1]\ra \x_{2n}[1]$.
It is nonzero by lemma~\ref{compos}. 
\end{proof}

\end{document}